\numberwithin{equation}{section} 
\theoremstyle{plain} 
\newtheorem{thm}{Theorem}[section]
\newtheorem{lem}[thm]{Lemma}
\newtheorem{prop}[thm]{Proposition}
\newtheorem{conj}[thm]{Conjecture}
\newtheorem{assu}[thm]{Assumption}
\theoremstyle{definition} 
\newtheorem{defi}[thm]{Definition}
\theoremstyle{remark} 
\newtheorem{rmk}[thm]{Remark}
\newcommand{\N}{\mathbb{N}}
\newcommand{\R}{\mathbb{R}}
\newcommand{\E}{\mathbb{E}}   
\newcommand{\Pb}{\mathbb{P}}  
\newcommand{\Law}{\mathrm{Law}} 
\newcommand{\rd}{\mathrm{d}}  
\newcommand{\defeq}{\coloneqq} 
\DeclareMathOperator{\supp}{supp}
\def\XXint#1#2#3{{\setbox0=\hbox{$#1{#2#3}{\int}$ }
\vcenter{\hbox{$#2#3$ }}\kern-.6\wd0}}
\title[Propagation of dissociatedness and graphon sampling lemma]{Non-exchangeable mean-field theory for adaptive weights: propagation of dissociatedness and graphon sampling lemma}
\author{Datong Zhou}
\address{Laboratoire Jacques-Louis Lions and Laboratoire de Probabilités, Statistique et Modélisation, Sorbonne Université, 75005 Paris, France}
\email{datong.zhou@sorbonne-universite.fr}
\thanks{This project has received funding from the European Union’s Horizon 2020 research and innovation programme under the Marie Skłodowska-Curie grant agreement No. 101034255.} 
\begin{document}

\maketitle

\begin{abstract}

We develop a mean-field theory for large, non-exchangeable particle (agent) systems where the states and interaction weights co-evolve in a coupled system of SDEs. A first main result is the establishment of the propagation of dissociatedness, a conceptual generalization of the classical propagation of chaos that accommodates the intrinsic local correlations between particles and their weights. The limiting McKean-Vlasov process is characterized by an Aldous-Hoover representation on a filtered probability space, beyond the standard one-particle law (or a family thereof). Paralleling the classical equivalence between propagation of chaos and the convergence of empirical measures to the one-particle law, we show that the propagation of dissociatedness corresponds to the convergence of the empirical structure under a distance unifying the Wasserstein distance for particles and the cut distance for weights. This quantitative stability is grounded in an adaptation of the sampling lemma from dense graph theory, analogous to the classical concentration results for empirical measures in the Wasserstein distance.

\end{abstract}

\tableofcontents

\section{Overview} \label{sec:overview}

\subsection{The paradigm of mean-field limits: from exchangeable to non-exchangeable, and plasticity}

The modern paradigm for studying large systems of interacting particles was established over 150 years ago by Boltzmann \cite{boltzmann1872weitere}, marking a revolutionary shift from tracking individual microscopic trajectories to adopting a statistical description. 
The challenge of rigorously deriving macroscopic laws from microscopic dynamics, famously posed by Hilbert as his sixth problem, has since driven the development of kinetic theory. 
Foundational works by Kac \cite{kac1956foundations}, who introduced the notion of ``propagation of chaos'', and by Vlasov \cite{vlasov1950many} and McKean \cite{mckean1969propagation}, solidified the \emph{mean-field limit} as one of the central concepts in probability theory and statistical physics.

In the 21st century, the applications of mean-field theory have expanded dramatically beyond its origins in statistical physics.
It is now an indispensable tool in biomathematics for modeling collective behaviors like flocking \cite{degond2018mathematical, vicsek2012collective} and neuron spiking models \cite{caceres2011analysis, delarue2015global, fournier2016toy}, in social sciences to understand opinion dynamics \cite{degond2017continuum, naldi2010mathematical}, as well as in data science for analyzing the training of artificial neural networks \cite{chizat2018global,mei2018mean}. 
The remarkable success of this theory historically hinges on a core symmetry assumption often called \emph{exchangeability}. 
Mathematically, this implies that the interaction mechanism is indifferent to the permutation of particles, allowing the large-scale behavior of the system to be described by a single representative distribution.

However, many critical models at the frontier of modern science are defined by the breaking of such symmetry. 
In complex networks, such as the brain's connectome \cite{hulse2021connectome} or social graphs, interactions are highly structured and \emph{non-exchangeable}\footnote{We adopt the term ``non-exchangeable'' following recent literature in kinetic theory \cite{ayi2024large, ayi2024mean, jabin2025mean, jabin2023mean}, highlighting the contrast with classical mean-field theory, where particle exchangeability allows the limit to be fully described by a one-particle distribution; here, the heterogeneity of interaction weights renders such a straightforward description insufficient. 
However, it is important to note that at a higher level, any multi-agent system retains a fundamental symmetry: the dynamics is invariant under simultaneous permutation of the labels (indices), provided that all determining factors (here, states and weights) are treated as part of the system configuration rather than a fixed background.
From this perspective, our study is consistent with the theory of \emph{exchangeable arrays} in the probabilistic literature \cite{aldous1981representations, hoover1979relations, kallenberg1989representation}.}. 
Recent mathematical progress has begun to address this by incorporating the concept of \emph{graph limits} (e.g., graphons) into the mean-field framework \cite{bayraktar2023graphon, chiba2018mean, jabin2025mean, kaliuzhnyi2018mean}. 
In these static non-exchangeable settings, which particle can interact with which depends on a prescribed, possibly dense, weighted graph. 
The continuum limit is described by an extended Vlasov equation coupled with a static graphon kernel, which is typically fixed \emph{a priori} on a specific embedding space (e.g., the unit interval).

Yet, a further essential challenge arises: \emph{Plasticity} (or structural adaptivity). 
In many realistic systems, the interaction network is not a fixed background; it evolves adaptively in response to the particles' states. 
A paradigmatic example is Hebbian learning in neuroscience, where ``cells that fire together, wire together''. 
From a mathematical perspective, this creates a tightly coupled feedback loop between the stochastic differential equations (SDEs) governing the particles and the random evolution of the graph weights themselves. 
The classical assumption of a static limit space (e.g., a fixed probability space indexing the continuum of particles) breaks down, as the structure of interactions is itself a random process being shaped by the flow.

The challenge of plasticity naturally invites a complementary perspective from the theory of \emph{dynamic random networks}.
In parallel to the developments in kinetic theory, significant progress has been made in characterizing the evolution of time-varying graphs  \cite{crane2016dynamic, ganguly2025mean, pensky2019dynamic}.
This duality offers a profound insight: the system can be viewed either as an interacting particle system depending on a dynamical interaction graph, or conversely, as a dynamic random network where the nodes are equipped with complex internal dynamics (such as SDEs).
In this article, we demonstrate that the mean-field approach from particle systems and the limit theory from dense graphs naturally converge to describe this coupled evolution.

\subsection{A prototypical model}

To isolate the fundamental challenges arising from the reciprocal feedback between states and weights dynamics, we focus on a prototypical model that couples standard diffusive particle dynamics with state-dependent weight evolution. For a discussion on more diverse dynamics, we refer to Section~\ref{subsubsec:diverse_dynamics}.

Specifically, we consider a system of $N$ agents where the state $\bm{X}_i(t) \in \mathbb{D}$ ($= \mathbb{R}^d$ or $\mathbb{T}^d$) and the interaction weight $\bm{W}_{i,j}(t) \in \mathbb{R}$ co-evolve according to the following coupled system of SDEs:
\begin{subequations}\label{eqn:main_SDEs}
\begin{equation}  \label{eqn:main_SDEs_X}
\rd \bm{X}_i = \mu(\bm{X}_i) \rd t + \nu \rd \bm{B}_{i} + \frac{1}{N} \sum_{j \in [N] \setminus \{i\}} \bm{W}_{i,j} \sigma(\bm{X}_i, \bm{X}_j) \rd t, \quad \forall i \in [N],
\end{equation}
\begin{equation} \label{eqn:main_SDEs_w}
\rd \bm{W}_{i,j} = \alpha(\bm{X}_i, \bm{X}_j) \bm{W}_{i,j} \rd t + \beta(\bm{X}_i, \bm{X}_j) \rd t, \quad \forall i,j \in [N], i \neq j.
\end{equation}
\end{subequations}
Here, $\mu: \mathbb{D} \to \mathbb{R}^d$ represents the drift (e.g., intrinsic dynamics or external force), $\sigma: \mathbb{D} \times \mathbb{D} \to \mathbb{R}^d$ is the interaction kernel, and $\nu \ge 0$ denotes the noise intensity associated with independent Brownian motions $(\bm{B}_i)_{i \in [N]}$. 
The weights $\bm{W}_{i,j}$ are treated as dynamic stochastic variables driven by the particle correlations via the coefficients $\alpha, \beta: \mathbb{D} \times \mathbb{D} \to \mathbb{R}$.

\subsection{From classical chaos to propagation of dissociatedness}
In the classical exchangeable mean-field theory ($\bm{W}_{i,j} \equiv 1$), one of the cornerstone concepts is \emph{propagation of chaos} \cite{kac1956foundations, sznitman1991topics}. Intuitively, as $N \to \infty$, the vanishing influence of individual particles effectively erases finite-$N$ correlations. This asymptotic decoupling, leading to \emph{i.i.d.~behavior}, reduces the macroscopic description to a single ``typical'' particle evolving via the McKean-Vlasov SDE, where the interaction sum is replaced by an expectation over the common law.

In our non-exchangeable system with adaptive weights, however, there is no single ``typical'' particle whose law describes the global dynamics.
Due to the reciprocal feedback described in \eqref{eqn:main_SDEs}, the state $\bm{X}_i(t)$ and the interaction weights $\bm{W}_{i,j}(t)$ are strongly coupled. 
Unlike the mean-field interaction which scales as $1/N$, this $O(1)$ local coupling remains non-vanishing as $N \to \infty$. 
Consequently, the pair $(\bm{X}_i, \bm{W}_{i,j})$ cannot be decoupled.

Nevertheless, a more refined structured independence survives. 
Borrowing concept from the theory of exchangeable arrays \cite{aldous1981representations, hoover1979relations, kallenberg1989representation}, we formalize this as:

\begin{defi}[Dissociatedness] \label{defi:dissociatedness}
Let $\bm{\mathcal{Y}} = \{\bm{Y}_\alpha\}_{\alpha \in \mathcal{I}}$ be a family of random variables indexed by finite ordered tuples of integers. 
For each index $\alpha = (i_1, \dots, i_k)$, let $\supp(\alpha) = \{i_1, \dots, i_k\} \subset \mathbb{N}$ denote the set of agents involved.
We say that $\bm{\mathcal{Y}}$ is \emph{dissociated} (or jointly dissociated) if, for any two disjoint sets of agents $I, J \subset \mathbb{N}$ (i.e., $I \cap J = \emptyset$), the sub-collection of variables supported on $I$, denoted by $\bm{\mathcal{Y}}_I \defeq \{\bm{Y}_\alpha : \supp(\alpha) \subseteq I\}$, is independent of the sub-collection supported on $J$, denoted by $\bm{\mathcal{Y}}_J \defeq \{\bm{Y}_\alpha : \supp(\alpha) \subseteq J\}$.
\end{defi}

In our context, the family $\bm{\mathcal{Y}}$ encapsulates the full \emph{state-weight structure} of the system, consisting of states indexed by singletons (i.e., $\bm{Y}_{i} = \bm{X}_i$) and weights indexed by ordered pairs (i.e., $\bm{Y}_{(i,j)} = \bm{W}_{i,j}$). 
Analogous to the classical mean-field picture, the finite system \eqref{eqn:main_SDEs} satisfies dissociatedness only asymptotically. 
Ideally, to filter out the vanishing correlations arising from $O(1/N)$ interactions, we construct the McKean-Vlasov SDEs \eqref{eqn:main_McKean_Vlasov} (detailed later), which yields a limiting process $(\overline{\bm{X}}, \overline{\bm{W}})$ satisfies Definition~\ref{defi:dissociatedness} \emph{exactly}. 
Within this ideal system, while $\overline{\bm{X}}_1$ and $\overline{\bm{W}}_{1,2}$ remain correlated due to the shared index $1$, recursive applications of the definition imply that $\overline{\bm{X}}_1$, $\overline{\bm{X}}_2$, and $\overline{\bm{W}}_{3,4}$ form a mutually independent set. 

Assuming sufficient regularity of the coefficients in \eqref{eqn:main_SDEs} and appropriate moment bounds on the initial data, the following theorem establishes the propagation of dissociatedness, proving that the finite system stays within $O(N^{-1/2})$ distance of this ideal dissociated limit.

\begin{thm}[Propagation of Dissociatedness] \label{thm:propagation_of_independent_mixture}
Assume that the coefficients of the system \eqref{eqn:main_SDEs} satisfy the regularity conditions in Assumption~\ref{assu:coefficients}.
Consider initial data $(\bm{X}_{(0)}, \bm{W}_{(0)})$ for a system of size $N$ that satisfies the dissociatedness property (Definition~\ref{defi:dissociatedness}) and the moment bounds in Assumption~\ref{assu:initial_data}.

Let $(\bm{X}(t), \bm{W}(t))$ be the unique solution (in the almost sure sense) to the finite system \eqref{eqn:main_SDEs} starting from initial data $(\bm{X}_{(0)}, \bm{W}_{(0)})$.
And let $(\overline{\bm{X}}(t), \overline{\bm{W}}(t))$ be the unique solution to the McKean-Vlasov SDEs \eqref{eqn:main_McKean_Vlasov} and \eqref{eqn:sampling_definition} in the sense of Proposition~\ref{prop:well_posedness_McKean_Vlasov} and Remark~\ref{rmk:finite_sampling}, starting from the same initial data and driven by the same stochastic forcing.

Then, the limiting process $(\overline{\bm{X}}(t), \overline{\bm{W}}(t))$ remains dissociated at the path level.
Moreover, there exists a constant $C(t) > 0$, depending on time, dimension, and the bounds in Assumptions~\ref{assu:coefficients} and \ref{assu:initial_data}, such that:
\begin{equation} \label{eqn:propagation_of_independent_mixture}
\sup_{i \in [N]} \E \Big[ |\bm{X}_i(t) - \overline{\bm{X}}_i(t)| \Big] + \sup_{i \neq j} \E \Big[ |\bm{W}_{i,j}(t) - \overline{\bm{W}}_{i,j}(t)| \Big] \leq \frac{C(t)}{\sqrt{N}}.
\end{equation}
\end{thm}

\subsection{Empirical stability}

The propagation of dissociatedness established in Theorem~\ref{thm:propagation_of_independent_mixture} serves as the non-exchangeable counterpoint to the classical propagation of chaos at the level of individual trajectories.

However, there is a more ``global'' aspect to the mean-field theory. 
It is well-understood that \emph{``the propagation of chaos property is equivalent to the convergence of the empirical process towards a deterministic limit''} \cite{chaintron2022propagation1,chaintron2022propagation2}. 
This allows one to track the evolution of the entire population as a single object (the one-particle distribution), with convergence quantified by distances such as the Wasserstein distance.

For our system \eqref{eqn:main_SDEs}, however, the naive ``empirical measure of states converging to one-particle distribution'' alone is insufficient to capture the dynamics. 
By projecting solely onto the state space, it discards the essential information of the interaction network, rendering the reconstruction of the full dynamics impossible.

To restore the non-exchangeable counterpoint of this equivalence, we utilize the Aldous-Hoover representation theorem to reduce the collection of dissociated variables into a functional representation (analogous to the classical setting, which can be understood as a de Finetti reduction for chaotic variables). 
We invoke this fundamental result, adapted to our setting, to identify the natural limit object:

\begin{prop}[Aldous-Hoover Representation] \label{prop:aldous_hoover}
Let $\bm{\mathcal{Y}}_N = ((\bm{X}_{i})_{i \in [N]}, (\bm{W}_{i,j})_{i,j \in [N]})$ be a dissociated array as in Definition~\ref{defi:dissociatedness}. 
Then there exists a latent probability space $(\Omega, \mathcal{A}, \pi)$ and measurable functions:
\begin{equation} \label{eqn:triplet_functions}
\mathsf{X}: \Omega \to \mathbb{D} \quad \text{and} \quad \mathsf{W}: \Omega \times \Omega \to \mathbb{R},
\end{equation}
such that the array is equal in law to the sampling of these functions via independent latent variables $\bm{\xi}_1, \dots, \bm{\xi}_N$ on $\Omega$:
\begin{equation} \label{eqn:representation_equality}
\bm{X}_i \stackrel{d}{=} \mathsf{X}(\bm{\xi}_i), \qquad \bm{W}_{i,j} \stackrel{d}{=} \mathsf{W}(\bm{\xi}_i, \bm{\xi}_j).
\end{equation}
Here, we allow the laws $\pi_i = \Law(\bm{\xi_i})$ to be non-identical (e.g., to accommodate deterministic initial data with $\Omega = [N]$), provided they satisfy the averaging condition $\frac{1}{N}\sum_{i=1}^N \pi_i = \pi$.
\end{prop}

Proposition~\ref{prop:aldous_hoover} is an application of an adapted Aldous-Hoover theorem\footnote{Standard Aldous-Hoover representation typically yields a kernel of the form $w(\xi_i, \xi_j, \eta_{i,j})$, where $\eta_{i,j}$ represents independent edge-specific randomness. However, in our context where the system size $N$ is fixed prior to the limit (and where we do not impose symmetry), one can technically absorb the row-specific edge noises into the node label by redefining $\tilde{\xi}_i \defeq (\xi_i, (\eta_{i,j})_{j \in [N]})$. This trick eliminates the explicit edge noise in the representation, yielding the form $w(\tilde{\xi}_i, \tilde{\xi}_j)$.} for dissociated arrays (see, e.g., \cite{kallenberg2005probabilistic}).
The functional representation $\mathcal{Y} = (\mathsf{X}, \mathsf{W})$ derived above characterizes the \emph{structural limit} of the system. 
This concept sharply contrasts with the classical \emph{distributional limit}, which captures only the state evolution (via empirical measures and one-particle distributions) at the expense of the interaction geometry.

To define the topology of the state-weight structure, one must recognize that the specific indexing, whether by discrete labels $i \in [N]$ or the more general variables $\xi \in \Omega$, serves merely as a representation rather than the essence of the dynamics. 
Consequently, while a theory dependent on specific labeling is technically conceivable, a natural definition must look beyond these artifacts to capture the intrinsic, \emph{representation-independent} structure.

This requirement mirrors the foundational distances in both constituent fields. 
In classical mean-field theory \cite{dobrushin1979vlasov, sznitman1991topics}, the typically used \emph{Wasserstein distance} measures the probability measures (which can be reduced from random variables such as $\mathsf{X}: \Omega \to \mathbb{D}$) by treating particles as indistinguishable. 
In graph limit theory \cite{borgs2008convergent, lovasz2012large}, the \emph{unlabeled cut distance} identifies graphs or \emph{graphons} (or $L^\infty$-kernels of form $\mathsf{W}: \Omega \times \Omega \to \mathbb{R}$) up to measure-preserving transformations. We also refer to \cite{janson2008graph} for a characterization of graphons via dissociated arrays.

Synthesizing these two perspectives, we introduce the distance $\delta_{W_1, \square}(\mathcal{Y}^{(1)},\mathcal{Y}^{(2)})$ (see Section~\ref{subsec:metric_def} for a more rigorous discussion).
Defined as the infimum over all couplings via measure-preserving maps $\varphi_k: \Omega \to \Omega^{(k)}$ bridging the distinct latent spaces, it takes the form:
\begin{equation} \label{eqn:informal_metric_def}
\delta_{W_1, \square}(\mathcal{Y}^{(1)},\mathcal{Y}^{(2)}) \coloneqq \inf_{\varphi_1, \varphi_2} \left( \|\mathsf{X}^{(1)}_{\varphi_1} - \mathsf{X}^{(2)}_{\varphi_2}\|_{L^1} + \|\mathsf{W}^{(1)}_{\varphi_1} - \mathsf{W}^{(2)}_{\varphi_2}\|_{\square} \right),
\end{equation}
where $\mathsf{X}^{(k)}_{\varphi_k}(\xi) \coloneqq \mathsf{X}^{(k)}(\varphi_k(\xi))$ and $\mathsf{W}^{(k)}_{\varphi_k}(\xi, \zeta) \coloneqq \mathsf{W}^{(k)}(\varphi_k(\xi), \varphi_k(\zeta))$ denote the rearranged state and interaction kernel, respectively.
Finally, $\|\cdot\|_{\square}$ denotes the cut norm, a fundamental concept in graph limit theory measuring the maximum interaction intensity over rectangular subsets.
We remark that its equivalence to the operator norm $\|\cdot\|_{L^\infty \to L^1}$ makes it the natural choice for controlling error propagation in the present setting:
\begin{equation*}
\|\mathsf{W}\|_{\square} \coloneqq \sup_{S,T \subseteq \Omega} \left| \int_{S \times T} \mathsf{W}(\xi, \zeta) \, \rd\pi(\xi) \rd\pi(\zeta) \right|.
\end{equation*}
Equipped with the distance $\delta_{W_1, \square}$, we establish our second main result: the stability of the empirical state-weight structure.

\begin{thm}[Empirical Stability of State-Weight Structure] \label{thm:empirical_data_stability}
Assume that the coefficients of the system \eqref{eqn:main_SDEs} satisfy the regularity conditions in Assumption~\ref{assu:coefficients}.
Let $\bm{\mathcal{Y}}_N(t) = ((\bm{X}_{i}(t))_{i \in [N]}, (\bm{W}_{i,j}(t))_{i,j \in [N]})$ denote the empirical structure of the finite system \eqref{eqn:main_SDEs}.
Let $\overline{\mathcal{Y}}(t) = (\overline{\mathsf{X}}(t), \overline{\mathsf{W}}(t))$ denote the structural limit evolving according to the McKean-Vlasov SDEs \eqref{eqn:main_McKean_Vlasov} in the sense of Proposition~\ref{prop:well_posedness_McKean_Vlasov}.
Assume that both initial configurations $\bm{\mathcal{Y}}_N(0)$ and $\overline{\mathcal{Y}}(0)$ satisfy the moment bounds specified in Assumption~\ref{assu:initial_data}.

Then, there exists a constant $C(t) > 0$, depending on time, dimension, and the bounds in Assumptions~\ref{assu:coefficients} and \ref{assu:initial_data}, such that:
\begin{equation} \label{eqn:empirical_data_stability}
\mathbb{E} \Big[ \delta_{W_1, \square}\big( \bm{\mathcal{Y}}_N(t), \overline{\mathcal{Y}}(t) \big) \Big] 
\le C(t) \, \mathbb{E} \Big[ \delta_{W_1, \square}\big( \bm{\mathcal{Y}}_N(0), \overline{\mathcal{Y}}(0) \big) \Big] + \frac{C(t)}{\sqrt{\log N}}.
\end{equation}
\end{thm}
This result serves as the non-exchangeable counterpoint to the celebrated quantitative stability estimates for the classical Vlasov equation. 
In the exchangeable setting ($\bm{W}_{i,j} \equiv 1$), under similar Lipschitz assumptions on the coefficients, it is well-known (see, e.g., \cite{bolley2011stochastic, dobrushin1979vlasov, sznitman1991topics}) that for any particle system $(\bm{X}_i(t))_{i \in [N]}$ satisfying \eqref{eqn:main_SDEs_X} and any solution $f$ to the corresponding Vlasov equation, the stability estimate takes the form:
\begin{equation} \label{eqn:classical_stability_review}
\E \bigg[ W_1 \bigg( \frac{1}{N}\sum_{i=1}^N \delta_{\bm{X}_i(t)} , f(t,\cdot)\bigg) \bigg]
\leq C(t) \E \bigg[ W_1 \bigg( \frac{1}{N}\sum_{i=1}^N \delta_{\bm{X}_i(0)} , f(0,\cdot)\bigg) \bigg] + C(t) N^{- \theta}.
\end{equation}
Our Theorem~\ref{thm:empirical_data_stability} recovers exactly this estimate in the geometry of state-weight structures.
This establishes a complete parallel: \emph{just as classical propagation of chaos is equivalent to the weak-* convergence of the empirical measure (typically quantified by the Wasserstein distance), the propagation of dissociatedness finds its natural ``global'' characterization as the convergence of the empirical structure in the topology induced by $\delta_{W_1, \square}$.}

The first term on the right-hand side of \eqref{eqn:empirical_data_stability} controls the dynamical amplification of the initial approximation error, analogous to the first term in \eqref{eqn:classical_stability_review}. 
The second term, $O(1/\sqrt{\log N})$, represents the cost of approximating the continuum limit by a finite sample. 
In the classical exchangeable setting, the corresponding rate $N^{-\theta}$ arises from the concentration of empirical measures in finite-dimensional spaces, where sampling $N$ independent points classically yields polynomial convergence in the Wasserstein distance \cite{boissard2011problemes, dobric1995asymptotics, fournier2015rate}.

In contrast, a general graph limit cannot be embedded exactly into a low-dimensional Euclidean space; consequently, the logarithmic rate is the optimal worst-case guarantee without further structural assumptions, reflecting the inherent complexity of the interaction structure. 
We formalize this sampling barrier in the following lemma, extending the sampling result of \cite{borgs2008convergent} to include state variables:
\begin{lem}[Sampling Lemma] \label{lem:sampling_lemma}
Let $\mathcal{Y} = (\mathsf{X}, \mathsf{W})$ be a state-weight structure represented on a latent probability space $(\Omega, \mathcal{A}, \pi)$.
Assume that $\mathsf{X} \in L^2(\Omega)$ and $\mathsf{W} \in L^\infty(\Omega \times \Omega)$.

Consider $N$ independent latent variables $\bm{\xi}_1, \dots, \bm{\xi}_N$ taking values in $\Omega$, whose laws $\pi_i = \Law(\bm{\xi}_i)$ are absolutely continuous with respect to $\pi$ and satisfy the averaging condition $\frac{1}{N}\sum_{i=1}^N \pi_i = \pi$.
Let $\bm{\mathcal{Y}}_N = ((\bm{X}_{i})_{i \in [N]}, (\bm{W}_{i,j})_{i,j \in [N]})$ be the discrete representation generated by sampling $\mathcal{Y}$ via these latent variables:
\begin{equation}
\bm{X}_i = \mathsf{X}(\bm{\xi}_i), \qquad \bm{W}_{i,j} = \mathsf{W}(\bm{\xi}_i, \bm{\xi}_j).
\end{equation}

Then, there exists a constant $C > 0$, such that the following approximation holds:
\begin{equation} \label{eqn:sampling_lemma}
\mathbb{E} \Big[ \delta_{W_1, \square}\big( \bm{\mathcal{Y}}_N, \mathcal{Y} \big) \Big] \leq \frac{C (\|\mathsf{X}\|_{L^2} + \|\mathsf{W}\|_{L^\infty})}{\sqrt{\log N}}.
\end{equation}
\end{lem}

This lemma critically allows us to translate the stability results from the continuous limit space back to the finite particle system in the proof of Theorem~\ref{thm:empirical_data_stability}. Furthermore, this result serves as a constructive proof of the \emph{compactness} of state-weight structures under moment constraints, as it guarantees that finite-dimensional approximations form a dense subset in the $\delta_{W_1, \square}$ metric.

\subsection{Organization of the article}
The remainder of this article is organized as follows. 
In Section~\ref{sec:discussion}, we situate our results within the broader literature, bridging perspectives from mean-field theory, dynamic random networks, and graph theory.
Section~\ref{sec:framework} rigorously constructs the mathematical framework, including the precise definition of the McKean-Vlasov SDEs and the $\delta_{W_1, \square}$ distance. 
Section~\ref{sec:proofs} establishes the well-posedness of the limit equations and provides the proofs for the propagation of dissociatedness (Theorem~\ref{thm:propagation_of_independent_mixture}) and the empirical stability (Theorem~\ref{thm:empirical_data_stability}). 
Finally, the proof of the Sampling Lemma (Lemma~\ref{lem:sampling_lemma}), which adapts combinatorial techniques to our setting, is deferred to Appendix~\ref{subsec:proof_sampling}.

\section{Discussion and Future Perspectives} \label{sec:discussion}

Our results bridge several active lines of research, connecting the kinetic theory of particle systems with the probability theory of exchangeable arrays and the limit theory of dense graphs. In this section, we discuss how our framework of structural limits relates to and extends these existing perspectives.

\subsection{Graph kernel mean-field limit and the fiberwise perspective} 
\label{subsec:kernel_fiberwise}

The past decade has witnessed rapid advancements in non-exchangeable mean-field theory. The use of graph limits to derive macroscopic equations (often PDEs) for such systems arguably dates back to works like \cite{chiba2018mean, kaliuzhnyi2018mean}, though the study of heterogeneous agent systems using other mathematical tools may have a longer history.

This framework proved highly effective for systems with heterogeneous yet \emph{non-adaptive} interaction structures. In the continuum limit, the heterogeneity is encapsulated by a fixed kernel $w: \Omega \times \Omega \to \R$, and the collective dynamics is characterized by an extended density function $f(t,x,\xi)$, where $\xi \in \Omega$ serves as a continuum generalization of the discrete agent labels $i \in [N]$. Intuitively, $f(t, \cdot, \xi) \in \mathcal{P}(\mathbb{D})$ represents the probability distribution of the ``typical particle'' associated with the label $\xi$. Taking the state dynamics in \eqref{eqn:main_SDEs_X} as an illustrative example (with fixed weights), approaches such as those in \cite{chiba2018mean, kaliuzhnyi2018mean} posit that the limit satisfies an extended Vlasov equation of the form:
\begin{equation} \label{eqn:multi_agent_Vlasov}
\begin{aligned}
\partial_t f(t,x,\xi) &+ \nabla_x \cdot \left( \left[ \mu(x) + \int_{\Omega} w(\xi,\zeta) \int_{\mathbb{D}} \sigma(x, y) f(t, y, \zeta) \, \rd y \rd \zeta \right] f(t, x, \xi) \right)
\\
&- \frac{\nu^2}{2} \Delta_x f(t,x,\xi) = 0.
\end{aligned}
\end{equation}
To justify this mean-field limit, the typical strategy involves first establishing that the discrete weights $w^{(n)}$, when embedded into the latent space $\Omega$, converge to the continuum kernel $w$. Building on this, one then adapts the foundational techniques of classical kinetic theory, specifically the characteristic flow stability developed by Neunzert \cite{neunzert1978mathematical, neunzert2006approximation}, Braun and Hepp \cite{braun1977vlasov}, Dobrushin \cite{dobrushin1979vlasov}, and Sznitman \cite{sznitman1991topics}, to establish the stability of $f$ in a fiberwise metric, such as an $L^p$-averaged Wasserstein distance:
\begin{equation} \label{eqn:fiberwise_extension}
\begin{aligned}
d_F \left( f^{(n)}(t), f(t) \right) \coloneqq \left[ \int_{\Omega} \Big( d_{\mathcal{P}} \big( f^{(n)}(t, \cdot, \xi), f(t, \cdot, \xi) \big) \Big)^p \rd \pi(\xi) \right]^{1/p}.
\end{aligned}
\end{equation}
Early contributions, such as \cite{chiba2018mean, kaliuzhnyi2018mean}, typically required $w^{(n)} \to w$ in the strong $L^1$ topology. It was later recognized in works such as \cite{bayraktar2023graphon, bet2024weakly, jabin2024non} that the \emph{cut norm} topology (and the associated theory of graphons \cite{lovasz2012large}) provides a more natural framework. Essential to this approach is that the cut norm is critical in controlling the operator norm of the interaction term, thereby avoiding the need for stronger norms and robustly handling random fluctuations that vanish in the large $N$ limit (e.g., in Erd\H{o}s-R\'enyi type random graphs).

From the perspective of our structural approach, the above framework benefits from the non-adaptive nature of the interactions and a two-scale structure: conceptually, each label $\xi$ indexes a ``local'' population of ``nearly identical'' particles (forming a local fiber); then, these labels form a stable graph continuum. Under this assumption, the subsequent dynamics for the state distribution $f$ primarily involve advection and diffusion in the state variable $x$, while the label $\xi$ remains a static parameter without further evolution.

For empirical data $((\bm{X}_{i})_{i \in [N]}, (\bm{W}_{i,j})_{i,j \in [N]})$, however, this stratification is often artificial. In a finite system, it is not determined \emph{a priori} how particles $i$ should be clustered together to shape a specific label $\xi$. Without such clustering, one is forced to compare a discrete particle against a potentially dispersed distribution. Notably, \cite{jabin2024non} addressed this difficulty by carefully employing a weak-* topology on the label space $\Omega = [0,1]$ to manage the aggregation of particles with distinct indices $i$ (and hence distinct embedded positions $\xi_i$). Such flexibility is absent in the rigid fiberwise definition \eqref{eqn:fiberwise_extension}.
Nevertheless, this issue becomes increasingly critical when the weights $\bm{W}$ are adaptive: even if such fibers could be identified initially, the feedback between states and weights would cause particles within the same fiber to diverge. As the interaction profiles evolve stochastically, the fiber structure effectively ``splits'', rendering any description based on fixed coordinates $\xi$ insufficient to capture the full dynamics.

Crucially, the limit described by $f(t, x, \xi)$ and $w(\xi, \zeta)$ constitutes a special case of our structural limit. Such a system satisfies the dissociatedness property. Specifically, as mentioned in our companion work \cite{zhou2024coupling}, one can explicitly construct its Aldous-Hoover representation $(\mathsf{X}, \mathsf{W})$ on an extended latent space $\Omega_f \defeq \mathbb{D} \times \Omega$ equipped with the measure $\pi_f(\rd x, \rd \xi) = f(t, \rd x, \xi) \rd \xi$. The state and weight functions are then given by the projection $\mathsf{X}(x, \xi) = x$ and the pullback $\mathsf{W}((x, \xi), (y, \zeta)) = w(\xi, \zeta)$.
However, when the clustering is obscure or dynamically evolving as in the adaptive setting, the Aldous-Hoover representation and the propagation of dissociatedness provide a more robust formalism.

We emphasize, however, that not all developments in non-exchangeable mean-field theory can be readily reformulated within this probabilistic framework.
Addressing very sparse interaction regimes often employs alternative graph limits based on singular measures on the latent space. These structures entail probability-zero connections incompatible with standard probability theory of exchangeable arrays, and the approximation by finite systems is typically established in a more restricted and delicate sense.
For example, \cite{gkogkas2022graphop} analyzed graphops (objects introduced in \cite{backhausz2022action}), while \cite{kuehn2022vlasov} considered a notion of digraph measures. Both works established convergence $f^{(n)} \to f$ (effectively utilizing $p=\infty$ in \eqref{eqn:fiberwise_extension}) by employing ad hoc discretizations to ensure stronger weight convergence in their respective spaces. 
In parallel, \cite{jabin2025mean, jabin2023mean} considered extended graphons based on weak-* definitions, utilizing techniques akin to the Szemerédi Regularity Lemma \cite{szemeredi1975sets, szemeredi1978regular} to guarantee that the limit of their hierarchical estimates is identified by a kernel.

Finally, it is worth noting that the kernel (graph limit) based approach has demonstrated significant versatility, having been extended to encompass mean-field games \cite{aurell2022stochasticII, caines2021graphon, carmona2022stochasticI, lacker2023label, parise2023graphon}, higher-order nonlinear dependencies on the measure \cite{coppini2025nonlinear, crucianelli2024interacting}, and hypergraphs \cite{ayi2024mean, kuehn2025vlasov}.

\subsection{Dynamic random networks and probabilistic foundations}
\label{subsec:dynamic_networks}

In parallel to the developments in kinetic theory, there has been an approach to constructing probabilistic descriptions of network dynamics based on the classical results of Aldous \cite{aldous1981representations}, Hoover \cite{hoover1979relations}, and Kallenberg \cite{kallenberg1989representation}.

The rigorous study of exchangeable Markov processes integrated with graph limit theory was arguably initiated in \cite{crane2016dynamic, crane2017exchangeable}. By leveraging induced subgraph densities as fundamental observables, these works used reverse martingale convergence and projective consistency to establish the convergence of finite-dimensional distributions to the continuum limit.
This analysis further revealed that the limiting sample paths possess locally bounded variation, and for Feller processes, the generator admits a decomposition into global rewiring, vertex-level updates, and independent edge flips (corresponding to classical Erd\H{o}s-R\'enyi dynamics that manifest as deterministic flows or vanishing noise in the mean-field limit). This theoretical framework was subsequently extended to more general exchangeable arrays in \cite{cerny2020markovian}.

Within the same probabilistic framework and the perspective of induced subgraph densities, recent studies \cite{athreya2021graphon, braunsteins2022graphon, braunsteins2023sample} have deeply explored the specific mechanism of ``vertex-level fluctuations''. Motivated by population genetics models (e.g., via mutation and resampling) where the graph structure depends one-way on the complex evolution of the particle dynamics, these works established functional limit theorems in the Skorokhod topology. Furthermore, the complex vertex-level dynamics lead to a loss of the Markov property on the space of graphs, thereby allowing for limiting paths of unbounded variation beyond the scope of \cite{crane2016dynamic}.

From the perspective of our structural approach, the simultaneous convergence of induced subgraph densities (or graph homomorphism densities) considered in \cite{crane2016dynamic, crane2017exchangeable} is topologically equivalent to convergence in the unlabeled cut distance \cite{lovasz2012large}. Our framework addresses the fully coupled setting where complex vertex-level SDEs and edge-level dynamics exert a \emph{two-way} influence. In this context, the unified metric $\delta_{W_1, \square}$ provides a natural topology for dynamics propagating dissociatedness. 
As both approaches rely fundamentally on the theory of exchangeable arrays, the representation theory implies that for specific models defined by concrete rules, the limit identified through our explicit coupling coincides with the definition obtained via reverse martingales and projective consistency.
Furthermore, from our viewpoint, leveraging the propagated dissociatedness (specifically \eqref{eqn:propagation_of_independent_mixture}) readily yields quantitative estimates for graph homomorphism densities (e.g., $\mathbb{E}|t(F,\bm{W}^{(N)}) - t(F,\mathsf{W})| \sim O(N^{-\theta})$ for any $\theta < 1/2$ and for any finite graph $F$; see \cite[Theorem 10.3]{lovasz2012large} for the concentration of homomorphism densities.)

Finally, we note that the study of dynamic graphs extends beyond the frameworks discussed above. For a more combinatorial perspective, we refer to \cite{garbe2024flip} and the references therein, which analyze ``flip processes'' where edge updates are governed by replacement rules on random induced subgraphs, rather than following a ``Hebbian'' dynamic determined solely by the activity of the edge itself or its constituent vertices.

\subsection{Co-evolutionary dynamics and limit characterizations}
\label{subsec:coevolution_limits}

The approaches discussed above may intersect in the setting of a genuine co-evolution (or two-way feedback) between the interaction (edge) structure and particle (vertex) states.

In this context, the mean-field description via an extended Vlasov-type PDE driven by a graph kernel generally fails, as adaptive weights cause agents with identical labels to diverge, breaking the fiber structure. Nevertheless, a kernel-based limit can still be recovered under specific structural assumptions, such as ``gravity-like'' unary interactions. In such scenarios, where weights depend solely on the target agent, the interaction field is determined by the collective distribution, allowing the system to be described by population dynamics (see, e.g., \cite{ayi2021mean, ben2024graph}; see also \cite{ayi2024mean} for a recent review).

Moving to general pairwise (binary) adaptive interactions, a kernel-based continuum limit can also be established in the fully deterministic setting (e.g., when $\nu=0$ in \eqref{eqn:main_SDEs}). Specifically, continuum limits characterized by measure-valued differential equations have been investigated in \cite{gkogkas2023continuum, gkogkas2025mean}. From our perspective, this result is compatible with the probabilistic structure: for the deterministic dynamics, the Aldous-Hoover representation is not forced to increase in complexity with respect to the filtration process over time, allowing the graph limit to be naturally captured by an evolving kernel on a fixed latent space.
Applying this perspective to our system \eqref{eqn:main_SDEs} in the noiseless limit ($\nu = 0$), the continuum dynamics can be described by a coupled system of integro-differential equations. Using $\mathsf{X}(t, \xi)$ to denote the particle state at label $\xi \in \Omega$ and $\mathsf{W}(t, \xi, \zeta)$ for the evolving kernel, we present the limiting equation as in \cite{gkogkas2023continuum, gkogkas2025mean} in a pointwise form for simplicity (rather than the proper weak formulation):
\begin{equation}
\label{eqn:deterministic_limit}
\left\{
\begin{aligned}
\partial_t \mathsf{X}(t, \xi) &= \mu\big(\mathsf{X}(t, \xi)\big) + \int_{\Omega} \mathsf{W}(t, \xi,\zeta) \, \sigma\big(\mathsf{X}(t, \xi), \mathsf{X}(t, \zeta)\big) \, \mathrm{d} \zeta, \\
\partial_t \mathsf{W}(t, \xi,\zeta) &= \alpha\big(\mathsf{X}(t, \xi), \mathsf{X}(t, \zeta) \big) \mathsf{W}(t, \xi,\zeta) + \beta\big(\mathsf{X}(t, \xi), \mathsf{X}(t, \zeta) \big).
\end{aligned}
\right.
\end{equation}
We note that the analytic approach in \cite{gkogkas2023continuum, gkogkas2025mean} has some specific advantages. As discussed at the end of Section~\ref{subsec:kernel_fiberwise}, by stepping away from a full probabilistic description and employing a more cautious notion of convergence from the finite system, their limit process can be defined on more singular kernels (such as digraph measures or graphops). Capturing such singular structures is generally out of reach for the straightforward probabilistic characterization provided by the Aldous-Hoover representation.

Probabilistic frameworks that accommodate intrinsic stochasticity (where the filtration complexity generally precludes a fixed latent space representation) are exemplified by studies such as \cite{bayraktar2021mean, ganguly2025mean}. Both works characterize the asymptotic independence structure, or dissociatedness in our terminology, by examining the joint distribution of a fixed number of $k$ agents sampled from the total population $N$ (denoted as particle trajectories $\bm{X}_{1:k}^{(N)}$ and corresponding edges $\bm{W}_{1:k, 1:k}^{(N)}$) as the system size $N \to \infty$.
From our perspective, for all $k$, the resulting limit $(\overline{\bm{X}}_{1:k}, \overline{\bm{W}}_{1:k})$ is consistently generated from a unified limit structure (characterized by the Aldous-Hoover representation $(\mathsf{X}, \mathsf{W})$ via i.i.d. sampling of latent variables $\bm{\xi}_1, \dots, \bm{\xi}_k$). This corresponds to the classical picture where all $k$-marginals $\Law(\overline{\bm{X}}_{1:k})$ are not only tensorized but specifically arise as the $k$-fold tensor product $f^{\otimes k}$ of a single common distribution $f$.

Specifically, \cite{bayraktar2021mean} investigates a continuous-time interacting particle system where both particle states and interaction weights take values in a discrete space $\mathbb{Z}$ (effectively modeling dynamic multi-colored graphs). Adopting a pathwise perspective, they derived quantitative trajectory convergence rates of order $N^{-1/2}$ via Sznitman-type coupling and further studied Central Limit Theorems. In contrast, \cite{ganguly2025mean} operates in a discrete-time setting and recursively derives the property of asymptotic dissociatedness over time steps, while also providing a graphon-level analysis of the limiting network that characterizes the convergence of $\bm{W}^{(N)}$ in $\delta_{\square}$ via graph homomorphism densities. In this light, the convergence of $\bm{W}^{(N)}$ discussed in \cite{ganguly2025mean}, when viewed in conjunction with the convergence of empirical $\bm{X}^{(N)}$, corresponds to the full $\delta_{W_1, \square}$ convergence. Together with the trajectory-level analysis in \cite{bayraktar2021mean}, these findings across distinct models illuminate the complementary global and local facets of the propagation of dissociatedness.

\subsection{Relation to the hierarchical approach}
\label{subsec:coupling_and_tensorization}

Finally, we discuss an alternative perspective based on hierarchical techniques, a strategy well-established in classical kinetic theory (e.g., BBGKY hierarchies) and recently adapted to the non-exchangeable setting (see, e.g., \cite{jabin2025mean, jabin2023mean, zhou2024coupling}).

These hierarchies naturally generalize both the $k$-particle marginals from kinetic theory and the graph homomorphism densities (or subgraph densities) from graph limit theory.
Depending on the vantage point, they can be viewed as \emph{weighted marginals}, where the empirical distribution is modulated by the interaction weights, or as \emph{tensorized homomorphism densities}, effectively embedding the particle dynamics into a tensor product space indexed by the graph topology.

We present here the formulation streamlined from \cite{zhou2024coupling}, which systematizes related definitions evolving through \cite{jabin2025mean, jabin2023mean}.
Formally, for an oriented graph $F$ and a state-weight structure $\mathcal{Y} = (\mathsf{X}, \mathsf{W})$ on $(\Omega, \mathcal{A}, \pi)$, the observable $\tau(F, \mathcal{Y}) \in \mathcal{M}(\mathbb{D}^{\mathsf{v}(F)})$ is defined as:
\begin{equation} \label{eqn:tensorized_observable}
\tau(F, \mathcal{Y}) \coloneqq \int_{\Omega^{\mathsf{v}(F)}} \left( \prod_{(u,v) \in \mathsf{e}(F)} \mathsf{W}(\xi_u, \xi_v) \right) \bigotimes_{u \in \mathsf{v}(F)} \delta_{\mathsf{X}(\xi_u)} \, \prod_{u \in \mathsf{v}(F)} \rd\pi(\xi_u) .
\end{equation}
Crucially, these observables encode the intrinsic structure of the limit system.
As established in our companion work \cite{zhou2024coupling}, following the classical sampling and concentration inequality strategy of \cite{borgs2008convergent}, the convergence of these observables characterizes the topology induced by the metric.
Specifically, for a sequence $\mathcal{Y}_n$ and a limit $\mathcal{Y}$, the following equivalence holds (rigorously justified in \cite{zhou2024coupling} for the torus case $\mathbb{D} = \mathbb{T}$):
\begin{equation*}
\begin{aligned}
\tau(F,\mathcal{Y}_n) \xrightharpoonup{*} \tau(F,\mathcal{Y}), \ \forall \text{ simple oriented graphs } F
\quad \Longleftrightarrow \quad \mathcal{Y}_n \xrightarrow{\delta_{W_1, \square}} \mathcal{Y}.
\end{aligned}
\end{equation*}
This correspondence permits us to probe the intrinsic structural properties of $\mathcal{Y}$ without directly solving the optimization problem defining $\delta_{W_1, \square}$.

A computationally more tractable alternative arises if one restricts the hierarchy $\tau(F,\mathcal{Y})$ to \emph{acyclic} graphs (trees or forests); viewed through the lens of structural couplings, this amounts to a \emph{convex relaxation} of the optimization problem associated to $\delta_{W_1, \square}$.

Formally, this relaxation replaces the rigid rearrangements (measure-preserving maps) with coupling measures, shifting from point-to-point mappings to diffuse, ``Markovian'' correspondences (allowing mass splitting) convexifies the optimization domain. To quantify the discrepancy of interactions under such generalized couplings, we lift the kernels to linear operators.
Consider two structures $\mathcal{Y}^{(k)} = (\mathsf{X}^{(k)}, \mathsf{W}^{(k)})$ defined on probability spaces $(\Omega^{(k)}, \pi^{(k)})$ for $k \in \{1,2\}$.
For a coupling measure $\gamma \in \Pi(\pi^{(1)}, \pi^{(2)})$, we first define the forward coupling operator $T_\gamma: L^\infty(\Omega^{(2)}) \to L^1(\Omega^{(1)})$ via:
\begin{equation*}
T_\gamma(\psi)(\xi) \coloneqq \int_{\Omega^{(2)}} \psi(\zeta)\, \gamma(\xi, \rd \zeta), \quad \text{for a.e. } \xi \in \Omega^{(1)}.
\end{equation*}
Analogously, let $T_\gamma^*: L^\infty(\Omega^{(1)}) \to L^1(\Omega^{(2)})$ denote the adjoint coupling operator of $T_\gamma$. We further define the standard adjacency operator $T_{\mathsf{W}^{(k)}}: L^p(\Omega^{(k)}) \to L^p(\Omega^{(k)})$ (valid for any $p \in [1, \infty]$) associated with the kernel $\mathsf{W}^{(k)} \in L^\infty(\Omega^{(k)} \times \Omega^{(k)})$.
The distance $\gamma_{W_1, \square}$ is then defined by optimizing the transport cost alongside the operator commutation error:
\begin{equation} \label{eqn:gamma_distance}
\begin{aligned}
\gamma_{W_1, \square}(\mathcal{Y}^{(1)}, \mathcal{Y}^{(2)}) & \coloneqq \inf_{\gamma \in \Pi(\pi^{(1)}, \pi^{(2)})} \bigg[ \int_{\Omega^{(1)} \times \Omega^{(2)}} |\mathsf{X}^{(1)}(\xi) - \mathsf{X}^{(2)}(\zeta)| \, \gamma(\rd \xi, \rd \zeta)
\\
& \hspace{0.5cm} + \| T_{\mathsf{W}^{(1)}} \circ T_\gamma - T_\gamma \circ T_{\mathsf{W}^{(2)}} \|_{\text{op}} + \| T_\gamma^* \circ T_{\mathsf{W}^{(1)}} - T_{\mathsf{W}^{(2)}} \circ T_\gamma^* \|_{\text{op}} \bigg],
\end{aligned}
\end{equation}
where $\|\cdot\|_{\text{op}}$ denotes the $L^\infty \to L^1$ operator norm (e.g., in the first term, the difference is an operator mapping $L^\infty(\Omega^{(2)})$ to $L^1(\Omega^{(1)})$).

While the Monge-to-Kantorovich relaxation is typically gap-free in classical optimal transport, here it incurs a strict topological loss.
The reliance on diffuse couplings prevents the tracking of precise vertex identities needed to verify a ``return to the origin''.
This renders the $\gamma_{W_1, \square}$ topology blind to cycles (even in the unoriented sense) and thus strictly weaker than $\delta_{W_1, \square}$.
Nevertheless, exact topological equivalence is recovered precisely when the hierarchy is restricted to structures \emph{devoid of cycles} (rigorously justified in \cite{zhou2024coupling} for the torus case $\mathbb{D} = \mathbb{T}$):
\begin{equation*}
\begin{aligned}
\tau(F,\mathcal{Y}_n) \xrightharpoonup{*} \tau(F,\mathcal{Y}), \ \forall \text{ oriented forests } F
\quad \Longleftrightarrow \quad \mathcal{Y}_n \xrightarrow{\gamma_{W_1, \square}} \mathcal{Y}.
\end{aligned}
\end{equation*}
The pure graph-theoretic analogue of this phenomenon, known as \emph{fractional isomorphism} (or fractional overlays), has been explicitly studied in \cite{boker2021graph, dell2018lovasz, grebik2022fractional}.

Beyond establishing the structural equivalence, \cite{zhou2024coupling} integrates the hierarchical estimation techniques matured through \cite{jabin2025mean, jabin2023mean}, offering an approach complementary to the present work.
Specifically, it establishes the mean-field limit with respect to the $\gamma$-topology (an empirical stability analogous to \eqref{eqn:empirical_data_stability} rigorously justified for non-adaptive weights).
Conversely, while the trajectory-based coupling methods employed herein yield sharper convergence rates, they do not appear directly applicable to the $\gamma$-topology, as the introduction of diffuse couplings compromises the precise error propagation estimates.

\subsection{Future perspectives} \label{subsec:future_work}

We conclude by outlining several directions we plan to explore in future work.

\subsubsection{Sparser interaction weights}
A natural first question concerns the generalization to unbounded weights, such as $L^p$ graphons (for $p > 1$) \cite{borgs2019Lp, borgs2018Lp}. While technically demanding, we expect such extensions to follow the conceptual roadmap of the present dense framework.

A far more profound challenge arises in regimes of ``singular interaction structures'', exemplified by the graphops of \cite{gkogkas2022graphop, gkogkas2023continuum}, the digraph measures of \cite{gkogkas2025mean, kuehn2022vlasov}, and the extended graphons in \cite{jabin2025mean, jabin2023mean}.
In such settings, the probabilistic intuition underpinning the Sampling Lemma begins to falter, and the concentration mechanisms characterizing dense limits degenerate.
Consequently, establishing a rigorous limit theory likely requires retreating to more fundamental principles, specifically the Szemerédi Regularity Lemma \cite{szemeredi1975sets, szemeredi1978regular}.
Reconstructing the quantitative stability framework without the luxury of standard concentration arguments remains a formidable frontier to be navigated step by step.

\subsubsection{Multi-body interactions and hypergraphs}
While the present work focuses on pairwise interactions, there is a growing interest in extending mean-field theories to systems with multi-body interactions, modeled by hypergraphs \cite{ayi2024mean, kuehn2025vlasov}. From a probabilistic perspective, the interaction tensors in such systems can be viewed as higher-order exchangeable arrays.

We anticipate that the sampling techniques and convergence analysis developed here could, with sufficient care, be extended to the hypergraph setting. In particular, the propagation of dissociatedness is expected to persist. However, the global characterization would be less trivial: the analogue of the cut norm and the associated regularity or sampling lemmas become significantly more intricate and less canonical in the hypergraph regime. We refer to \cite{janson2008graph}, which elucidates how graphon limits could be induced from the structure of dissociated arrays, as a conceptual starting point for identifying the natural limit objects in such future explorations.

\subsubsection{Diverse dynamics for particles and weights} \label{subsubsec:diverse_dynamics}

While the prototypical model considered in this work focuses on the diffusive particle dynamics given by \eqref{eqn:main_SDEs_X}, the proposed framework is expected to be applicable to a broader class of kinetic models. Although providing an exhaustive survey is beyond the scope of this discussion, we note that integrate-and-fire dynamics, where interactions are mediated by Poisson events (jumps), serve as a widely-considered alternative to diffusive processes in the context of mathematical neuroscience. We refer to \cite{jabin2024non, jabin2023mean} where variations of \eqref{eqn:main_SDEs_X} in this direction are considered.

Another direction, specific to systems with adaptive weights, is to incorporate more diverse dynamical rules for the weight evolution.
For instance, the deterministic adaptation \eqref{eqn:main_SDEs_w} could be replaced by stochastic alternatives.
The first natural extension is the addition of diffusive noise, representing fluctuations in the link strength:
\begin{equation} \label{eqn:diffusive_w}
\rd \bm{W}_{i,j} = \alpha(\bm{X}_i, \bm{X}_j) \bm{W}_{i,j} \rd t + \beta(\bm{X}_i, \bm{X}_j) \rd t + \kappa \rd \bm{B}_{i,j},
\end{equation}
where $(\bm{B}_{i,j})$ are independent Brownian motions.
A second, distinct regime corresponds to dynamic random graphs (or binary opinion dynamics), where edges flip between discrete states $\{0,1\}$ via Poissonian jumps:
\begin{equation} \label{eqn:jump_w}
\rd \bm{W}_{i,j} = - \bm{W}_{i,j} \, \rd \bm{N}^{\text{off}}_{i,j} + (1 - \bm{W}_{i,j}) \, \rd \bm{N}^{\text{on}}_{i,j}.
\end{equation}
Here, $\bm{N}^{\text{off}}_{i,j}$ and $\bm{N}^{\text{on}}_{i,j}$ are Poisson processes with state-dependent intensities $\tilde{\alpha}(\bm{X}_i, \bm{X}_j)$ (controlling the $1 \to 0$ decay) and $\tilde{\beta}(\bm{X}_i, \bm{X}_j)$ (controlling the $0 \to 1$ creation), respectively.

Rigorous trajectory-wise control of such edge-specific stochasticity requires a refined analysis.
Ideally, one must characterize the propagation of dissociatedness via the full version of the Aldous-Hoover representation:
\begin{equation*}
\mathsf{X}: \Omega \to \mathbb{D} \quad \text{and} \quad \mathsf{W}: \Omega \times \Omega \times \tilde \Omega \to \mathbb{R},
\end{equation*}
where the additional latent dimension $\eta \in \tilde \Omega$ explicitly encodes the independent noise associated with each edge.
However, as elucidated in the literature of dynamic graphs \cite{crane2016dynamic, crane2017exchangeable}, such independent edge fluctuations typically average out in the continuum limit, manifesting effectively as deterministic flows for the interaction kernel.
For instance, in the binary flip model \eqref{eqn:jump_w}, the macroscopic evolution of the connection probability is expected to follow the deterministic structure \eqref{eqn:main_SDEs_w}, with effective drift coefficients derived from the jump intensities:
\begin{equation*}
\alpha(\cdot) = - \big( \tilde \alpha(\cdot) + \tilde \beta(\cdot) \big) \quad \text{and} \quad \beta(\cdot) = \tilde \beta(\cdot).
\end{equation*}
This suggests that the core stability results of this article should extend to these stochastic regimes, provided the microscopic noise remains strictly independent across edges.
Although the diffusive model \eqref{eqn:diffusive_w} introduces unbounded weights (linking to the $L^p$ regime discussed above), the physical intuition remains straightforward: independent microscopic noise, whether continuous or discrete, is expected to vanish in the mean-field limit due to the law of large numbers.
We therefore anticipate the following result:

\begin{conj}[Mean-field weight noise does not affect the limit] \label{conj:empirical_data_stability_limit}
Adopt the setting of Theorem~\ref{thm:empirical_data_stability}, but assume the $N$-particle system $\bm{\mathcal{Y}}_N(t) = ((\bm{X}_{i}(t))_{i \in [N]}, (\bm{W}_{i,j}(t))_{i,j \in [N]})$ evolves according to the stochastic dynamics \eqref{eqn:main_SDEs_X} coupled with the noisy weight updates \eqref{eqn:diffusive_w} or \eqref{eqn:jump_w}.
Let $\overline{\mathcal{Y}}(t) = (\overline{\mathsf{X}}(t), \overline{\mathsf{W}}(t))$ denote the structural limit solution to the McKean-Vlasov SDEs \eqref{eqn:main_McKean_Vlasov} in the sense of Proposition~\ref{prop:well_posedness_McKean_Vlasov} (corresponding to the original dynamics \eqref{eqn:main_SDEs_X}-\eqref{eqn:main_SDEs_w}).

Then, the quantitative stability estimates of this article remain valid. Specifically, there exists a constant $C(t)$, depending on time, dimension, and the bounds in Assumptions~\ref{assu:coefficients} and \ref{assu:initial_data}, such that:
\begin{equation*}
\begin{aligned}
\mathbb{E} \Big[ \delta_{W_1, \square}\big( \bm{\mathcal{Y}}_N(t), \overline{\mathcal{Y}}(t) \big) \Big]
\le C(t) \, \mathbb{E} \Big[ \delta_{W_1, \square}\big( \bm{\mathcal{Y}}_N(0), \overline{\mathcal{Y}}(0) \big) \Big] + \frac{C(t)}{\sqrt{\log N}}.
\end{aligned}
\end{equation*}
\end{conj}

This perspective naturally generalizes to weights taking values in a larger finite set, such as $\{-1, 0, 1\}$, where the transition intensities between any two states are governed by distinct functions.
However, in this regime, a single scalar graphon limit becomes insufficient due to the loss of distributional information.
For instance, a ``mixed zero'' resulting from an equal mixture of $+1$ and $-1$ states may exhibit totally different transition dynamics compared to a ``native zero'' state.
To avoid such ambiguities, the limit must be characterized by a \emph{multi-color graphon} that simultaneously tracks the density of each discrete state (e.g., by adapting the definition of $\delta_{W_1, \square}$ to control separate limits for $\mathsf{W}_+$ and $\mathsf{W}_-$).

A conceptually identical, yet analytically more demanding, challenge arises when the weights evolve continuously but with highly non-linear dependence on their own value:
\begin{equation} \label{eqn:nonlinear_w}
\rd \bm{W}_{i,j} = \alpha(\bm{X}_i, \bm{X}_j, \bm{W}_{i,j}) \rd t.
\end{equation}
In this setting, standard graphon convergence (in the cut norm) $\mathsf{W}_n \to \mathsf{W}$ is insufficient, as it generally does not imply the convergence of the drift term $\alpha(\cdot, \mathsf{W}_n) \to \alpha(\cdot, \mathsf{W})$, thereby precluding the propagation of cut norm estimates via Gronwall's inequality for $t > 0$.
A rigorous limit theory in this context would likely necessitate a topology strong enough to preserve observables of the form $\beta(\mathsf{W}_n) \to \beta(\mathsf{W})$ for arbitrary non-linear functions $\beta: \mathbb{R} \to \mathbb{R}$, conceptually connecting this problem to the theory of renormalized solutions for transport equations (DiPerna-Lions theory \cite{diperna1989ordinary}).

\subsubsection{Fast adaptation limit}
Another related direction explores the regime of time-scale separation, where the weights evolve significantly faster than the particle positions.
Consider the following singularly perturbed system, governed by a small parameter $\epsilon \ll 1$:
\begin{subequations}\label{eqn:decay_SDEs}
\begin{equation} \label{eqn:decay_SDEs_X}
\rd \bm{X}^\epsilon_i = \mu(\bm{X}^\epsilon_i) \rd t + \nu \rd \bm{B}_{i} + \frac{1}{N} \! \sum_{j \in [N] \setminus \{i\}} \bm{W}^\epsilon_{i,j} \sigma(\bm{X}^\epsilon_i, \bm{X}^\epsilon_j) \rd t,
\end{equation}
\begin{equation} \label{eqn:decay_SDEs_w}
\rd \bm{W}^\epsilon_{i,j} = \epsilon^{-1} \big(-\bm{W}^\epsilon_{i,j} + \beta(\bm{X}^\epsilon_i, \bm{X}^\epsilon_j) \big) \rd t.
\end{equation}
\end{subequations}
As $\epsilon \to 0$, the fast variables $\bm{W}^\epsilon_{i,j}$ rapidly relax to the quasi-steady state given by $\beta(\bm{X}^\epsilon_i, \bm{X}^\epsilon_j)$.
Consequently, the coupled system \eqref{eqn:decay_SDEs} is expected to converge to a limit with an effective interaction kernel $K(x,y) = \beta(x,y)\sigma(x,y)$:
\begin{equation} \label{eqn:decay_SDEs_limit}
\rd \bm{X}_i = \mu(\bm{X}_i) \rd t + \nu \rd \bm{B}_{i} + \frac{1}{N} \! \sum_{j \in [N] \setminus \{i\}} \beta(\bm{X}_i, \bm{X}_j) \sigma(\bm{X}_i, \bm{X}_j) \rd t.
\end{equation}
This limit bridges our model with the classical exchangeable mean-field theory.
Analogous singular limits have been previously analyzed in the context of discrete states and weights in \cite{bayraktar2021mean}.
However, the extension to continuous collective motion models, such as the Cucker-Smale model \cite{bolley2011stochastic, gerber2025uniform, ha2009simple}, remains an interesting avenue for future research.
From this viewpoint, the adaptive dynamics \eqref{eqn:decay_SDEs} can be interpreted as a ``delayed'' Cucker-Smale system, where the alignment strength is not determined instantaneously by the current configuration but adapts based on the particles' interaction history.
Investigating these memory effects could provide new insights into both the non-exchangeable systems and the fundamental mechanisms of collective motion.

\subsubsection{Partial homogenization via adaptation} There is one more interesting direction. To build the intuition, it is helpful to consider a toy model:

Let $N \geq 1$ be a multiple of 6. For simplicity, we consider only the deterministic version ($\nu=0$) of the system \eqref{eqn:decay_SDEs} with weight linear relaxation parameter $\epsilon = 1$ fixed. Consider two systems with identical initial states, $X^{(1)}_i(0) = X^{(2)}_i(0)$ for all $i \in [N]$, which exhibit a periodic structure with period $N/6$: 
\begin{equation*}
\begin{aligned}
X^{(1)}_i(0) = X^{(1)}_{i+N/6}(0), \quad \forall 1 \leq i \leq 5N/6.
\end{aligned}
\end{equation*}
The initial weights, however, are given by distinct block matrix structures. Let $J_1$ be an $N/6 \times N/6$ matrix.
The matrices $J_2$ and $J_3$ are $2 \times 2$ and $3 \times 3$ block matrices, respectively, where each block is the matrix $J_1$.
\begin{itemize}
\item The first system, $W^{(1)}(0)$, is defined by partitioning the $N$ agents into two equal-sized groups, giving a complete bipartite structure:
\begin{equation*}
W^{(1)}(0) = 2 \begin{pmatrix} 0 & J_{3} \\ J_{3} & 0 \end{pmatrix}.
\end{equation*}
\item The second system, $W^{(2)}(0)$, is defined by partitioning the agents into three equal-sized groups, forming a directed cycle:
\begin{equation*}
W^{(2)}(0) = 3 \begin{pmatrix} 0 & J_{2} & 0 \\ 0 & 0 & J_{2} \\ J_{2} & 0 & 0 \end{pmatrix}.
\end{equation*}
\end{itemize}
One can readily verify that for all $t \ge 0$, the state dynamics of the two systems are identical, i.e., $X^{(1)}_i(t) = X^{(2)}_i(t)$ for all $i \in [N]$, and 
\begin{equation*}
\begin{aligned}
X^{(1)}_i(t) = X^{(1)}_{i+N/6}(t), \quad \forall 1 \leq i \leq 5N/6.
\end{aligned}
\end{equation*}
This occurs because the total interaction experienced by any agent $i \in [N]$ is identical in both systems, albeit composed of contributions from different sets of agents. Nevertheless, despite this identical evolution, the initial weight configurations $W^{(1)}(0)$ and $W^{(2)}(0)$ can be far apart in the cut distance, yet in our $\delta_{W_1,\square}$. This observation suggests that our metric, while effective for proving convergence, may be too strict to capture the notion of dynamical similarity in all cases.
It turns out that the weaker $\gamma_{W_1,\square}$ topology mentioned in Section~\ref{subsec:coupling_and_tensorization} is suitable here, namely, by taking $\mathcal{Y}^{(k)} = ((X^{(k)}_{i})_{i \in [N]}, (W^{(k)}_{i,j})_{i,j \in [N]})$ on $\Omega = [N]$, $k = 1,2$, one has
\begin{equation*}
\begin{aligned}
\gamma_{W_1,\square}\big( \mathcal{Y}^{(1)}(t), \mathcal{Y}^{(2)}(t) \big) = 0, \quad \forall t \geq 0. 
\end{aligned}
\end{equation*}
The infimum $0$ is achieved by taking $\gamma \in \Pi([N], [N])$ as
\begin{equation*}
\begin{aligned}
\gamma_{i,j} = \frac{1}{6N} \mathbbm{1} \{ i - j \text{ mod } N/6 = 0 \}.
\end{aligned}
\end{equation*}
In other words, we ``mix the $6$ periods of $[N]$ together'' in the coupling. 

In addition, a intriguing observation arises from the long-term behavior of the weights. Since the state evolution is identical ($X^{(1)}(t) = X^{(2)}(t)$ for all $t$) and maintains the $N/6$ periodicity, it is easy to see from the solution form that
\begin{equation*}
W_{i,j}^{(k)}(t) - e^{-t/\epsilon} W^{(k)}_{i,j}(0) =  \int_0^t \epsilon^{-1} e^{-(t-s)/\epsilon} \beta(X_i(s), X_j(s)) \rd s
\end{equation*}
yields a matrix that is independent of $k \in \{1,2\}$ and has a $6 \times 6$ block structure. This heuristically suggests that the difference $W^{(1)}(t) - W^{(2)}(t)$ decays to zero as $t \to \infty$, implying convergence in $L^1$ and in the cut norm.

The intuition from this toy model is that for systems of the form \eqref{eqn:decay_SDEs}, some heterogeneous weight structures (which are identical only in $\gamma_{W_1,\square}$) may tend towards partial homogenization (which are identical in $\delta_{W_1,\square}$) over long time scales. We therefore formalize such intuition as the following expected result:
\begin{conj}[Partial homogenization]
Adopt the setting of Theorem~\ref{thm:empirical_data_stability}, but for $k \in \mathbb{N}$, assume the $N_k$-particle system $\bm{\mathcal{Y}}^{(k)}(t) = ((\bm{X}^{(k)}_{i}(t))_{i \in [N_k]}, (\bm{W}^{(k)}_{i,j}(t))_{i,j \in [N_k]})$ evolves according to the SDE system \eqref{eqn:decay_SDEs} with $\epsilon = 1$. 
Let $\overline{\mathcal{Y}}(t) = (\overline{\mathsf{X}}(t), \overline{\mathsf{W}}(t))$ denote the structural limit solution to the McKean-Vlasov SDEs \eqref{eqn:main_McKean_Vlasov} in the sense of Proposition~\ref{prop:well_posedness_McKean_Vlasov} (corresponding to the particular dynamics \eqref{eqn:decay_SDEs}).
Moreover, assume
\begin{equation*}
\begin{aligned}
& \lim_{k \to \infty} \mathbb{E} \Big[ \gamma_{W_1,\square}\big( \bm{\mathcal{Y}}^{(k)}(0), \overline{\mathcal{Y}}(0) \big) \Big] = 0.
\end{aligned}
\end{equation*}
Then, ``the convergence at $t \to \infty$ takes place in the stronger sense'', namely,
\begin{equation*}
\begin{aligned}
& \lim_{t \to \infty}
\limsup_{k \to \infty}
\E \Big[ \delta_{W_1,\square}\big( \bm{\mathcal{Y}}^{(k)}(t), \overline{\mathcal{Y}}(t) \big) \Big]= 0.
\end{aligned}
\end{equation*}
\end{conj}

\subsubsection{Measurement of self-organization} A final direction concerns the self-organization from homogeneous initial data, which is a key feature distinguishing the stochastic system \eqref{eqn:main_SDEs} ($\nu > 0$) from its deterministic counterpart. Consider a simple toy model where $X_i(0) \equiv 0 \in \mathbb{D}$ and $W_{i,j}(0) \equiv 1$ for all $i,j$. If $\nu = 0$, the system remains homogeneous for all time: $X_i(t)$ and $W_{i,j}(t)$ stay constant across all indices. In contrast, when $\nu > 0$, the individual Brownian motions first cause the states $\bm{X}_i(t)$ to diverge, which in turn induces heterogeneity in the weights $\bm{W}_{i,j}(t)$.

Our results guarantee that as $N \to \infty$, this system converges to a well-defined mean-field limit $\overline{\mathcal{Y}}(t) = (\overline{\mathsf{X}}(t), \overline{\mathsf{W}}(t))$ in the $\delta_{W_1,\square}$ metric. An initial question is whether this emergent limit is non-trivial, i.e., is the limiting kernel $\overline{\mathsf{W}}(t)$ non-constant? This is not immediately obvious, as a limit of pointwise divergent objects can still be constant (e.g., the convergence of an Erd\H{o}s-R\'enyi graph to a constant graphon).

A more profound question concerns the regularity of this self-organized limit $\overline{\mathcal{Y}}(t)$. We can possibly investigate this by analyzing the $N \to \infty$ convergence rate of the independent mixture sampling error $\mathbb{E} [ \delta_{W_1, \square}( \overline{\bm{\mathcal{Y}}}_N(t), \overline{\mathcal{Y}}(t) ) ]$ as in Lemma~\ref{lem:sampling_lemma},
or more simply, $\mathbb{E} [ \delta_{\square}( \overline{\bm{W}}_N(t), \overline{\mathsf{W}}(t))]$.
Lemma~\ref{lem:sampling_lemma} or the Sampling Lemma in graphon theory guarantees a baseline convergence rate of at least $O(1/\sqrt{\log N})$. However, if the limit possesses additional smoothness, for instance, $\overline{\mathsf{W}}(t)$ is a Lipschitz-bounded kernel on a low-dimensional manifold as its latent space $\Omega$, one would expect a faster, polynomial rate of convergence related to the concentration of empirical measures in the Wasserstein space \cite{boissard2011problemes, dobric1995asymptotics, fournier2015rate}, such as
\begin{equation*}
\begin{aligned}
\mathbb{E} [ \delta_{\square}( \overline{\bm{W}}_N(t), \overline{\mathsf{W}}(t))] \lesssim \mathbb{E}_{\bm{\xi}} \left[ W_1 \left( \frac{1}{N} \sum_{i \in [N]} \delta_{\bm{\xi}_i}, \pi \right) \right] \lesssim N^{-\theta}. 
\end{aligned}
\end{equation*}    
Therefore, encountering a sampling convergence rate better than the logarithmic baseline should strongly suggest the existence of a non-trivial emergent structure and a degree of regularity in the limit.

\section{Rigorous Setup} \label{sec:framework}

In this section, we provide the rigorous mathematical definitions and assumptions that underpin our main results.

\subsection{Notations and the McKean-Vlasov limit system} \label{subsec:limit_system}

Our first main objective is to establish the propagation of (asymptotic) dissociatedness (Theorem~\ref{thm:propagation_of_independent_mixture}). 
Recall from Proposition~\ref{prop:aldous_hoover} that dissociatedness in our context is characterized by an underlying state-weight structure $(\mathsf{X}, \mathsf{W})$ defined on a latent probability space $(\Omega, \mathcal{A}, \pi)$.

We define the McKean-Vlasov dynamics directly on such abstract representations, from which the finite system $(\overline{\bm{X}}(t), \overline{\bm{W}}(t))$ preserving dissociatedness in Theorem~\ref{thm:propagation_of_independent_mixture} can be naturally obtained via the sampling highlighted in Proposition~\ref{prop:aldous_hoover} and Lemma~\ref{lem:sampling_lemma} (for the sake of illustration, we assume here that the initial data are i.i.d., while noting that the general framework in Proposition~\ref{prop:aldous_hoover} allows for non-identical distributions to cover deterministic configurations):
\begin{itemize}
\item
The initial configuration of a dissociated $N$-system can be viewed as generated by sampling latent variables $\bm{\xi}_1, \dots, \bm{\xi}_N$ i.i.d.\ from $\Omega$.
While the Kolmogorov extension theorem guarantees the existence of a probability space supporting such a sequence, the most convenient choice to model it explicitly is the product space $\Omega^N$.
\item
The dynamics in \eqref{eqn:main_SDEs} are further driven by independent Brownian motions $\bm{B}_1, \dots, \bm{B}_N$.
Let $(\widehat{\Omega}, \widehat{\mathcal{F}}, \widehat{\pi})$ be the canonical path space for a single Brownian motion (equipped with the Wiener measure and the natural filtration).
With this setup, the evolution of the $N$-system is naturally modeled on the product space $(\Omega \times \widehat{\Omega})^N$.
It is therefore natural to characterize the propagation of dissociatedness by coupling the finite trajectories with i.i.d.\ samples from a representative process on $\Omega \times \widehat{\Omega}$, both realized on the common probability space $(\Omega \times \widehat{\Omega})^N$.

\item We equip the product space $\Omega \times \widehat{\Omega}$ with the filtration $(\mathcal{G}_t)_{t \ge 0}$ given by $\mathcal{G}_t \coloneqq \mathcal{A} \otimes \widehat{\mathcal{F}}_t$. 
This construction reflects the hybrid nature of the randomness: the structural label $\xi$ is static (fully known at $t=0$), while the path of the noise $\widehat{\xi}$ is revealed dynamically.
\end{itemize}

Accordingly, on this filtered probability space, we view the limit processes as representations taking values in the path space of continuous functions. 
Let $\mathcal{C}([0,T]; \mathbb{D})$ denote the space of continuous paths taking values in the state space $\mathbb{D}$. 
We define the state process $\overline{\mathsf{X}}$, the weight process $\overline{\mathsf{W}}$, and the canonical Brownian motion $\overline{\mathsf{B}}$ formally as the following maps:
\[
    \overline{\mathsf{X}}: \Omega \times \widehat{\Omega} \to \mathcal{C}([0,T]; \mathbb{D}), \quad
    \overline{\mathsf{W}}: (\Omega \times \widehat{\Omega})^2 \to \mathcal{C}([0,T]; \mathbb{R}), \quad
    \overline{\mathsf{B}}: \widehat{\Omega} \to \mathcal{C}([0,T]; \mathbb{R}^d).
\]
Here, $\overline{\mathsf{B}}$ is the canonical projection mapping a noise path $\widehat{\xi} \in \widehat{\Omega}$ to itself.
To maximize intuition and maintain notational alignment with \eqref{eqn:main_SDEs}, we place the latent variables in the subscripts: $\overline{\mathsf{X}}_{\xi, \widehat{\xi}}$ denotes the path $\overline{\mathsf{X}}(\xi, \widehat{\xi})$, with $\overline{\mathsf{X}}_{\xi, \widehat{\xi}}(t)$ representing its value at time $t$.
In the SDEs, we usually suppress the explicit time dependence $t$ to emphasize the path dynamics.

We consider the following McKean-Vlasov limit system on the filtered product space. 
Reflecting the fact that the initial configuration relies solely on the static structural labels, we initialize the system using a representation $(\overline{\mathsf{X}}_0, \overline{\mathsf{W}}_0)$ defined on $\Omega$ by setting
\[
   \overline{\mathsf{X}}_{\xi, \widehat{\xi}}(0) = \overline{\mathsf{X}}_0(\xi) \quad \text{and} \quad \overline{\mathsf{W}}_{(\xi, \widehat{\xi}), (\zeta, \widehat{\zeta})}(0) = \overline{\mathsf{W}}_0(\xi, \zeta).
\]
The evolution is governed by the following system: for $\pi \otimes \widehat{\pi}$-almost every realization $(\xi, \widehat{\xi}) \in \Omega \times \widehat{\Omega}$,
\begin{subequations}\label{eqn:main_McKean_Vlasov}
    \begin{equation} \label{eqn:main_McKean_Vlasov_X}
        \mathrm{d}\overline{\mathsf{X}}_{\xi, \widehat{\xi}} = \mu(\overline{\mathsf{X}}_{\xi, \widehat{\xi}}) \mathrm{d}t + \nu \mathrm{d}\overline{\mathsf{B}}_{\widehat{\xi}} + \mathbb{E}_{(\zeta, \widehat{\zeta}) \sim \pi \otimes \widehat{\pi}} \Big[ \overline{\mathsf{W}}_{(\xi, \widehat{\xi}), (\zeta, \widehat{\zeta})} \, \sigma(\overline{\mathsf{X}}_{\xi, \widehat{\xi}}, \overline{\mathsf{X}}_{\zeta, \widehat{\zeta}}) \Big] \mathrm{d}t,
    \end{equation}
    and for $(\pi \otimes \widehat{\pi})^{\otimes 2}$-almost every pair of realizations $((\xi, \widehat{\xi}), (\zeta, \widehat{\zeta})) \in (\Omega \times \widehat{\Omega})^2$,
    \begin{equation} \label{eqn:main_McKean_Vlasov_w}
        \mathrm{d}\overline{\mathsf{W}}_{(\xi, \widehat{\xi}), (\zeta, \widehat{\zeta})} = \alpha(\overline{\mathsf{X}}_{\xi, \widehat{\xi}}, \overline{\mathsf{X}}_{\zeta, \widehat{\zeta}}) \, \overline{\mathsf{W}}_{(\xi, \widehat{\xi}), (\zeta, \widehat{\zeta})} \mathrm{d}t + \beta(\overline{\mathsf{X}}_{\xi, \widehat{\xi}}, \overline{\mathsf{X}}_{\zeta, \widehat{\zeta}}) \mathrm{d}t.
    \end{equation}
\end{subequations}

We note that the system couples processes defined on differing probability spaces: the state process $\overline{\mathsf{X}}$ is determined by a single index $(\xi, \widehat{\xi})$, whereas the weight process $\overline{\mathsf{W}}$ is determined by a pair.
Accordingly, we constructively define the unique solution to \eqref{eqn:main_McKean_Vlasov} as the limit of a Picard iteration sequence based on stochastic integrals with respect to semimartingales, as detailed in Proposition~\ref{prop:well_posedness_McKean_Vlasov}.

\begin{rmk}[Representative Dynamics vs. Continuum of Agents]
It is important to clarify that our framework focuses on the representative dynamics, driven by a single canonical Brownian motion $\overline{\mathsf{B}}$.
This formulation circumvents the technical complexities associated with a continuum of independent Brownian motions.
A parallel line of research addresses such constructions via the Fubini extension framework and the notion of essentially pairwise independency (e.p.i.) (see \cite{sun2006exact} and recent applications in \cite{aurell2022stochasticII,coppini2025nonlinear,crucianelli2024interacting}).
In our context, however, the sampling perspective (Lemma~\ref{lem:sampling_lemma}) makes this unnecessary: it suffices to describe the evolution of the representative law on $\Omega \times \widehat{\Omega}$ to fully characterize the asymptotic behavior of the finite system.
\end{rmk}

\subsection{Regularity assumptions and well-posedness} \label{subsec:assumptions_wellposedness}

To ensure the well-posedness of the McKean-Vlasov system \eqref{eqn:main_McKean_Vlasov} and to facilitate the subsequent analysis, we impose the following regularity conditions on the coefficients and the initial data.

\begin{assu}[Regularity of Coefficients] \label{assu:coefficients}
    We assume that in the system \eqref{eqn:main_SDEs} or \eqref{eqn:main_McKean_Vlasov}, the coefficients $\mu, \sigma, \alpha, \beta$ are bounded and Lipschitz continuous with respect to their spatial arguments, and $\nu$ is a non-negative constant.

    In addition, we assume that the interaction kernel $\sigma$ and the weight update function $\alpha$ exhibit an ``integral separable structure''. Specifically, for $\sigma: \mathbb{D} \times \mathbb{D} \to \mathbb{R}^d$, this means there exists a measure space $E$ and functions $\sigma_1: \mathbb{D} \times E \to \mathbb{R}^d$ and $\sigma_2: \mathbb{D} \times E \to \mathbb{R}$ such that
    \[
        \sigma(x,y) = \int_{E} \sigma_1(x, \lambda) \, \sigma_2(y, \lambda) \, \mathrm{d}\lambda, \quad \text{for a.e. } (x,y) \in \mathbb{D} \times \mathbb{D},
    \]
    where the integral is absolutely convergent in the sense that
    \[
        \int_{E} \|\sigma_1(\cdot, \lambda)\|_{L^\infty} \, \|\sigma_2(\cdot, \lambda)\|_{L^\infty} \, \mathrm{d}\lambda < \infty.
    \]
    A similar structure is assumed for the function $\alpha: \mathbb{D} \times \mathbb{D} \to \mathbb{R}$.
\end{assu}

\begin{rmk}[On the Integral Separable Structure]
    The integral separable structure posited above is a technical condition designed to enable a factorization strategy within the norm estimation. 
    A prominent example of this structure is found in functions with absolutely integrable Fourier transforms (i.e., $\|\mathcal{F}\sigma\|_{L^1} < \infty$), as considered in \cite{ayi2024mean, bet2024weakly}. For such a function, the Fourier inversion formula naturally yields a separable representation:
    \begin{equation*}
        \sigma(x,y) = \int_{\mathbb{R}^{2d}} (\mathcal{F} \sigma)(\lambda_1,\lambda_2) \, e^{2 \pi i x \cdot \lambda_1} \, e^{2 \pi i y \cdot \lambda_2} \, \mathrm{d}(\lambda_1, \lambda_2).
    \end{equation*}
    (Note that a standard decomposition of the complex exponentials ensures the factors $\sigma_1, \sigma_2$ remain real-valued.)
    While the Fourier transform provides a practical criterion for sufficiently smooth kernels (typically resulting fast Fourier decay) to satisfy Assumption~\ref{assu:coefficients}, it does not encompass all cases, as there exist many bounded ($L^\infty$) functions whose Fourier transforms are not in $L^1$.
\end{rmk}

In addition to the regularity of the coefficients, we require specific moment bounds. While these conditions are defined below for general random variables, in the context of our well-posedness and convergence analysis, they are primarily imposed on the initial configurations of the system (i.e., at time $t=0$).

\begin{assu}[Moment Bounds on Data] \label{assu:initial_data}
    When we state that a system satisfies Assumption~\ref{assu:initial_data}, we refer to the following conditions depending on the underlying structure:
    \begin{itemize}
        \item For a general representation given by measurable maps $\mathsf{X}: \Omega \to \mathbb{D}$ and $\mathsf{W}: \Omega \times \Omega \to \mathbb{R}$, we require that for some constant $C > 0$,
        \begin{equation*}
            \|\mathsf{X}\|_{L^2(\Omega)} \coloneqq \left( \int_{\Omega} |\mathsf{X}(\xi)|^2 \, \mathrm{d}\pi(\xi) \right)^{1/2} < C, \quad \text{and} \quad \|\mathsf{W}\|_{L^\infty(\Omega^2)} < C.
        \end{equation*}

        \item For a finite collection of random variables $((\bm{X}_{i})_{i \in [N]}, (\bm{W}_{i,j})_{i,j \in [N]})$, which are not necessarily dissociated, we require that for some constant $C > 0$,
        \begin{equation*}
            \frac{1}{N} \sum_{i \in [N]} \mathbb{E} \big[ |\bm{X}_i|^2 \big] < C, \quad \text{and} \quad \max_{i,j \in [N]} |\bm{W}_{i,j}| < C \quad \text{almost surely}.
        \end{equation*}

        \item For a sequence of systems (indexed by $k$ or $N$), the preceding conditions are assumed to hold uniformly with respect to the index.
    \end{itemize}
\end{assu}

With these assumptions, the construction of the solution via Picard iteration is mathematically justified.

\begin{prop}[Well-posedness and Stability of McKean-Vlasov SDEs] \label{prop:well_posedness_McKean_Vlasov}
    Consider the SDE system \eqref{eqn:main_McKean_Vlasov} with coefficients satisfying Assumption~\ref{assu:coefficients}.
    Assume that the initial data $(\overline{\mathsf{X}}_0, \overline{\mathsf{W}}_0)$ defined on the probability space $(\Omega, \mathcal{A}, \pi)$ satisfies Assumption~\ref{assu:initial_data}.
    Let $(\overline{\mathsf{B}}(t))_{t \in [0,T]}$ be the canonical Brownian motion defined on the filtered probability space $(\widehat{\Omega}, \widehat{\mathcal{F}}, (\widehat{\mathcal{F}}_t)_{t \ge 0}, \widehat{\pi})$.

    Then, there exists a unique (in the almost sure sense) pair of stochastic processes
    \begin{equation*}
        (\overline{\mathsf{X}}_{\xi,\widehat{\xi}}(t))_{(\xi,\widehat{\xi}) \in \Omega \times \widehat{\Omega}, t \in [0,T]} \quad \text{and} \quad (\overline{\mathsf{W}}_{(\xi,\widehat{\xi}),(\zeta,\widehat{\zeta})}(t))_{(\xi,\widehat{\xi}),(\zeta,\widehat{\zeta}) \in (\Omega \times \widehat{\Omega})^2, t \in [0,T]}
    \end{equation*}
    satisfying the integral equations corresponding to \eqref{eqn:main_McKean_Vlasov} almost surely on $[0,T]$, with the following properties:
    \begin{itemize}
        \item Adaptedness: The process $\overline{\mathsf{X}}(t)$ is measurable with respect to $\mathcal{A} \otimes \widehat{\mathcal{F}}_t$, and the process $\overline{\mathsf{W}}(t)$ is measurable with respect to the product filtration $(\mathcal{A} \otimes \widehat{\mathcal{F}}_t)^{\otimes 2}$.

        \item Moment Control: For some constant $C_T > 0$ depending on $T$ and the bounds in Assumptions~\ref{assu:coefficients} and \ref{assu:initial_data}, the following holds for all $t \in [0,T]$:
        \begin{equation*}
            \|\overline{\mathsf{X}}(t)\|_{L^2(\Omega \times \widehat{\Omega})} < C_T \quad \text{and} \quad \|\overline{\mathsf{W}}(t)\|_{L^\infty((\Omega \times \widehat{\Omega})^2)} < C_T.
        \end{equation*}

        \item Path Regularity: The solutions have continuous sample paths almost surely.
    \end{itemize}

Furthermore, consider two sets of initial data $(\overline{\mathsf{X}}^{(k)}_0, \overline{\mathsf{W}}^{(k)}_0)$ ($k=1,2$) satisfying Assumption~\ref{assu:initial_data}.
If they are defined on the same latent space $\Omega$, then for any $t \in [0,T]$, the following estimate holds:
\begin{equation*}
\begin{aligned}
&\|\overline{\mathsf{X}}^{(1)}(t) - \overline{\mathsf{X}}^{(2)}(t)\|_{L^1(\Omega \times \widehat{\Omega})} + \|\overline{\mathsf{W}}^{(1)}(t) - \overline{\mathsf{W}}^{(2)}(t)\|_{\square(\Omega \times \widehat{\Omega})}
\\
& \quad \leq C(t) \big( \|\overline{\mathsf{X}}^{(1)}_0 - \overline{\mathsf{X}}^{(2)}_0\|_{L^1(\Omega)} + \|\overline{\mathsf{W}}^{(1)}_0 - \overline{\mathsf{W}}^{(2)}_0\|_{\square(\Omega)} \big).
\end{aligned}
\end{equation*}
Here, $C(t) > 0$ is a constant depending on time, dimension, and the bounds in Assumptions~\ref{assu:coefficients} and \ref{assu:initial_data}.
For general initial data (not necessarily defined on the same space), we have the stability in the unlabeled distance:
\begin{equation*}
\begin{aligned}
&\delta_{W_1,\square} \big( ( \overline{\mathsf{X}}^{(1)}(t), \overline{\mathsf{W}}^{(1)}(t) ) , ( \overline{\mathsf{X}}^{(2)}(t), \overline{\mathsf{W}}^{(2)}(t) ) \big)
 \leq C(t) \delta_{W_1,\square} \big( ( \overline{\mathsf{X}}^{(1)}_0, \overline{\mathsf{W}}^{(1)}_0 ) , ( \overline{\mathsf{X}}^{(2)}_0, \overline{\mathsf{W}}^{(2)}_0 ) \big).
\end{aligned}
\end{equation*}

\end{prop}

\begin{rmk}[Link to Finite Dissociated Systems] \label{rmk:finite_sampling}
The finite dissociated system $(\overline{\bm{X}}(t), \overline{\bm{W}}(t))$ in Theorem~\ref{thm:propagation_of_independent_mixture}, starting with initial data $(\bm{X}_{(0)}, \bm{W}_{(0)})$, is constructed as follows.
By Proposition~\ref{prop:aldous_hoover}, this initial data is represented by $(\overline{\mathsf{X}}_0, \overline{\mathsf{W}}_0)$ and independent variables $\bm{\xi}_1, \dots, \bm{\xi}_N$ on a latent space $\Omega$, where the distributions $\pi_i = \Law(\bm{\xi}_i)$ satisfy the averaging condition $\pi = \frac{1}{N} \sum_{i=1}^N \pi_i$.
Given this initial structure $(\overline{\mathsf{X}}_0, \overline{\mathsf{W}}_0)$ and the measure $\pi$, Proposition~\ref{prop:well_posedness_McKean_Vlasov} yields $(\overline{\mathsf{X}}, \overline{\mathsf{W}})$.
We let $\widehat{\bm{\xi}}_i$ be the canonical path corresponding to the \emph{same} Brownian motion $\bm{B}_i$ that drives the original particle system \eqref{eqn:main_SDEs}.
The finite system is then defined via the sampling:
\begin{equation} \label{eqn:sampling_definition}
\overline{\bm{X}}_i(t) \coloneqq \overline{\mathsf{X}}_{\bm{\xi}_i, \widehat{\bm{\xi}}_i}(t), \qquad \overline{\bm{W}}_{i,j}(t) \coloneqq \overline{\mathsf{W}}_{(\bm{\xi}_i, \widehat{\bm{\xi}}_i), (\bm{\xi}_j, \widehat{\bm{\xi}}_j)}(t).
\end{equation}
Crucially, sharing the same Brownian motions $\bm{B}_i$ between the original system \eqref{eqn:main_SDEs} and these auxiliary processes \eqref{eqn:sampling_definition} constitutes the coupling on the same probability space
\begin{equation*}
\begin{aligned}
\Big( (\Omega \times \widehat{\Omega})^N, (\mathcal{A} \otimes \widehat{\mathcal{F}})^{\otimes N}, \bigotimes_{i=1}^N (\pi_i \otimes \widehat{\pi} ) \Big),
\end{aligned}
\end{equation*}
which is required for the pathwise convergence analysis.
\end{rmk}

\subsection{The \texorpdfstring{$\delta_{W_1, \square}$}{delta W1 box} distance} \label{subsec:metric_def}

To establish a rigorous notion for our quantitative estimates, we first consider the comparison of two state-weight structures $\mathsf{X}^{(k)}: \Omega \to \mathbb{D}$ and $\mathsf{W}^{(k)}: \Omega \times \Omega \to \mathbb{R}$, $k=1,2$, defined on the \textit{same} probability space $(\Omega, \mathcal{A}, \pi)$. In this setting, the discrepancy can be naturally measured via the hybrid norm:
\begin{equation*}
    \|\mathsf{X}^{(1)} - \mathsf{X}^{(2)}\|_{L^1(\Omega)} + \|\mathsf{W}^{(1)} - \mathsf{W}^{(2)}\|_{\square}.
\end{equation*}
Recall that $\|\cdot\|_{\square}$ denotes the cut norm in the theory of graph limits (see \cite{lovasz2012large}), defined for a kernel $\mathsf{W}: \Omega \times \Omega \to \mathbb{R}$ as
\begin{equation*}
    \|\mathsf{W}\|_{\square} \coloneqq \sup_{S,T \subseteq \Omega} \left| \int_{S \times T} \mathsf{W}(\xi, \zeta) \, \mathrm{d}\pi(\xi) \, \mathrm{d}\pi(\zeta) \right|,
\end{equation*}
where the supremum is taken over all measurable subsets $S, T$ of $\Omega$. To explicitly emphasize the underlying probability space, we will occasionally adopt the notation $\|\cdot\|_{\square(\Omega)}$.

To extend this comparison to systems defined on differing probability spaces, we employ measure-preserving maps. This allows us to define the ``unlabeled'' distance that captures the intrinsic structural similarity between systems, independent of the specific latent spaces.

\begin{defi}[Measure-preserving Map] \label{def:mp_map}
    For two probability spaces $(\Omega, \mathcal{A}, \pi)$ and $(\Omega', \mathcal{A}', \pi')$, a measurable map $\varphi: \Omega \to \Omega'$ is called \textit{measure-preserving} if
    \begin{equation*}
        \pi(\varphi^{-1}(A')) = \pi'(A') \quad \text{for all } A' \in \mathcal{A}'.
    \end{equation*}
    That is, the push-forward of $\pi$ by $\varphi$ coincides with $\pi'$, denoted as $\varphi_{\#} \pi = \pi'$.
\end{defi}

For a system $(\mathsf{X}, \mathsf{W})$ defined on the target space $\Omega'$ and a measure-preserving map $\varphi: \Omega \to \Omega'$, we define the \textit{pullback} (or relabeling) $(\mathsf{X}_\varphi, \mathsf{W}_\varphi)$ on the domain $\Omega$ as:
\begin{equation*}
    \mathsf{X}_\varphi(\cdot) = \mathsf{X} (\varphi(\cdot) ) \quad \text{and} \quad \mathsf{W}_\varphi (\cdot, \cdot) = \mathsf{W} (\varphi(\cdot), \varphi(\cdot)).
\end{equation*}
The unlabeled distance $\delta_{W_1, \square}$ is then defined by minimizing the hybrid norm difference between two such pullbacks on a common domain.

\begin{defi}[The distance $\delta_{W_1,\square}$] \label{defi:metric_delta}
    The distance between $(\mathsf{X}^{(1)}, \mathsf{W}^{(1)})$ and $(\mathsf{X}^{(2)}, \mathsf{W}^{(2)})$ defined on $\Omega^{(1)}$ and $\Omega^{(2)}$ respectively is defined as:
    \begin{equation*}
        \delta_{W_1,\square}\Big( (\mathsf{X}^{(1)}, \mathsf{W}^{(1)}), (\mathsf{X}^{(2)}, \mathsf{W}^{(2)}) \Big)
        \coloneqq
        \inf_{\varphi_1, \varphi_2} \left( \|\mathsf{X}^{(1)}_{\varphi_1} - \mathsf{X}^{(2)}_{\varphi_2}\|_{L^1(\Omega)} + \|\mathsf{W}^{(1)}_{\varphi_1} - \mathsf{W}^{(2)}_{\varphi_2}\|_{\square(\Omega)} \right),
    \end{equation*}
    where the infimum ranges over all probability spaces $(\Omega, \mathcal{A}, \pi)$ and measure-preserving maps $\varphi_k: \Omega \to \Omega^{(k)}$ ($k=1,2$).
\end{defi}

This metric is a natural generalization of existing concepts:
\begin{itemize}
    \item If the weights are trivial (e.g., $\mathsf{W}^{(k)} \equiv 1$), the interaction term vanishes, and the definition reduces to the classical Wasserstein-1 distance between the laws of $\mathsf{X}^{(1)}$ and $\mathsf{X}^{(2)}$:
    \begin{equation*}
        \inf_{\varphi_1, \varphi_2} \|\mathsf{X}^{(1)}_{\varphi_1} - \mathsf{X}^{(2)}_{\varphi_2}\|_{L^1} = W_1(\mathrm{Law}(\mathsf{X}^{(1)}), \mathrm{Law}(\mathsf{X}^{(2)})).
    \end{equation*}
    \item Conversely, omitting the state term $\mathsf{X}$ recovers the graphon cut distance $\delta_{\square}(\mathsf{W}^{(1)}, \mathsf{W}^{(2)})$ as defined in graph limit theory (see \cite[Chapters 8, 13]{lovasz2012large}).
\end{itemize}

\section{Proofs of Main Results} \label{sec:proofs}

This section contains the detailed proofs of the results stated in Section~\ref{sec:overview} and the well-posedness of the limit system defined in Section~\ref{sec:framework}.

\subsection{Proof of well-posedness (Proposition~\ref{prop:well_posedness_McKean_Vlasov})} \label{subsec:proof_well_posedness}

We construct the solution as a fixed point of an integral operator acting on the space of stochastic processes.
Let $\overline{\mathcal{Y}} = (\overline{\mathsf{X}}, \overline{\mathsf{W}})$ denote a pair of $(\mathcal{A} \otimes \widehat{\mathcal{F}}_t)$-adapted and $(\mathcal{A} \otimes \widehat{\mathcal{F}}_t)^{\otimes 2}$-adapted processes.
We define the integral operator $\Psi(\overline{\mathcal{Y}}) = (\Psi^{\mathsf{X}}(\overline{\mathcal{Y}}), \Psi^{\mathsf{W}}(\overline{\mathcal{Y}}))$ component-wise as follows:
for $(\pi \otimes \widehat{\pi})$-a.e. $(\xi, \widehat{\xi}), (\zeta, \widehat{\zeta})$ and $t \in [0,T]$,
\begin{equation} \label{eqn:operator_def}
\begin{aligned}
&\hspace{1cm} \begin{aligned}
\Psi^{\mathsf{X}}(\overline{\mathcal{Y}})_{\xi, \widehat{\xi}}(t) \coloneqq \overline{\mathsf{X}}_0(\xi) & + \int_0^t \mu(\overline{\mathsf{X}}_{\xi, \widehat{\xi}}(s)) \, \mathrm{d}s + \nu \overline{\mathsf{B}}_{\widehat{\xi}}(t)
\\
& + \int_0^t \mathbb{E}_{(\zeta, \widehat{\zeta}) \sim (\pi \otimes \widehat{\pi})} \Big[ \overline{\mathsf{W}}_{(\xi, \widehat{\xi}), (\zeta, \widehat{\zeta})}(s) \, \sigma(\overline{\mathsf{X}}_{\xi, \widehat{\xi}}(s), \overline{\mathsf{X}}_{\zeta, \widehat{\zeta}}(s)) \Big] \, \mathrm{d}s,
\end{aligned}
\\
&\begin{aligned}
\Psi^{\mathsf{W}}(\overline{\mathcal{Y}})_{(\xi, \widehat{\xi}), (\zeta, \widehat{\zeta})}(t) \coloneqq \overline{\mathsf{W}}_0(\xi, \zeta) & + \int_0^t \alpha(\overline{\mathsf{X}}_{\xi, \widehat{\xi}}(s), \overline{\mathsf{X}}_{\zeta, \widehat{\zeta}}(s)) \, \overline{\mathsf{W}}_{(\xi, \widehat{\xi}), (\zeta, \widehat{\zeta})}(s) \, \mathrm{d}s
\\
& + \int_0^t \beta(\overline{\mathsf{X}}_{\xi, \widehat{\xi}}(s), \overline{\mathsf{X}}_{\zeta, \widehat{\zeta}}(s)) \, \mathrm{d}s.
\end{aligned}
\end{aligned}
\end{equation}
The well-posedness and stability rely on the following key estimates for $\Psi$.

\begin{lem}[A Priori Estimates] \label{lem:picard_estimates}
Let $\overline{\mathcal{Y}} = (\overline{\mathsf{X}}, \overline{\mathsf{W}})$ be an adapted process.
The operator $\Psi$ satisfies the following moment propagation estimate: for any $t \in [0,T]$,
\begin{equation*}
\begin{aligned}
&\|\Psi^{\mathsf{X}}(\overline{\mathcal{Y}})\|_{t, L^2}^2 \leq 4 \bigg( \|\overline{\mathsf{X}}_0\|_{L^2}^2 + \|\mu\|_{L^\infty}^2 t^2 + d \nu^2 t + \|\overline{\mathsf{W}}\|_{t, L^\infty}^2 \|\sigma\|_{L^\infty}^2 t^2 \bigg),
\\
&\|\Psi^{\mathsf{W}}(\overline{\mathcal{Y}})\|_{t, L^\infty} \leq \|\overline{\mathsf{W}}_0\|_{L^\infty} + \int_0^t \Big( \|\alpha\|_{L^\infty} \|\overline{\mathsf{W}}\|_{s, L^\infty} + \|\beta\|_{L^\infty} \Big) \, \mathrm{d}s.
\end{aligned}
\end{equation*}
Here, the norms are defined as 
\begin{equation*}
\begin{aligned}
\|\overline{\mathsf{W}}\|_{t, L^\infty} \coloneqq \sup_{s \in [0,t]} \|\overline{\mathsf{W}}(s)\|_{L^\infty((\Omega \times \widehat{\Omega})^2)},
\quad \|\overline{\mathsf{X}}\|_{t, L^2} \coloneqq \sup_{s \in [0,t]} \|\overline{\mathsf{X}}(s)\|_{L^2(\Omega \times \widehat{\Omega})}.
\end{aligned}
\end{equation*}

Moreover, $\Psi$ satisfies the Lipschitz continuity estimate.
Let $\overline{\mathcal{Y}}^{(k)} = (\overline{\mathsf{X}}^{(k)}, \overline{\mathsf{W}}^{(k)})$ ($k=1,2$) be two adapted processes on the same latent space $\Omega$.
Then, for any $t \in [0,T]$,
\begin{equation*}
\begin{aligned}
& \|\Psi^{\mathsf{X}}(\overline{\mathcal{Y}}^{(1)}) - \Psi^{\mathsf{X}}(\overline{\mathcal{Y}}^{(2)})\|_{t, L^1} + \|\Psi^{\mathsf{W}}(\overline{\mathcal{Y}}^{(1)}) - \Psi^{\mathsf{W}}(\overline{\mathcal{Y}}^{(2)})\|_{t, \square}
\\
& \quad \leq \|\overline{\mathsf{X}}^{(1)}_0 - \overline{\mathsf{X}}^{(2)}_0\|_{L^1} + \|\overline{\mathsf{W}}^{(1)}_0 - \overline{\mathsf{W}}^{(2)}_0\|_{\square}
+ C(t) \int_0^t \Big( \|\overline{\mathsf{X}}^{(1)} - \overline{\mathsf{X}}^{(2)}\|_{s, L^1} + \|\overline{\mathsf{W}}^{(1)} - \overline{\mathsf{W}}^{(2)}\|_{s, \square} \Big) \, \mathrm{d}s,
\end{aligned}
\end{equation*}
where $C(t)$
depends on time, dimension, and the bounds in Assumptions~\ref{assu:coefficients} and $\|\overline{\mathsf{W}}_0\|_{L^\infty}$.
This estimate remains valid if all the cut norms $\|\cdot\|_{\square}$ are replaced by $L^1$ norms.
\end{lem}

\begin{proof}[Proof of Lemma~\ref{lem:picard_estimates}]
The moment estimates follow directly from Hölder's inequalities
and the Ito isometry $\mathbb{E}[|\overline{\mathsf{B}}(t)|^2] = d t$ ($d$ for dimension); we omit the detailed derivation.

We now turn to the Lipschitz continuity.
For brevity, let $z \coloneqq (\xi, \widehat{\xi})$ and $z' \coloneqq (\zeta, \widehat{\zeta})$ denote generic elements in the product space $\Omega \times \widehat{\Omega}$.
In the estimates below, we implicitly assume that integrals $\int \dots \mathrm{d}z$ and expectations $\mathbb{E}_{z'}$ are taken with respect to the default product measure $\pi \otimes \widehat{\pi}$.

\textit{Step 1: Estimate for the state component $\Psi^{\mathsf{X}}$.}
From the definition in \eqref{eqn:operator_def}, the $L^1$-difference of the state components satisfies:
\begin{equation*}
\begin{aligned}
& \|\Psi^{\mathsf{X}}(\overline{\mathcal{Y}}^{(1)}) - \Psi^{\mathsf{X}}(\overline{\mathcal{Y}}^{(2)})\|_{t, L^1}
\\
& \quad \leq \|\overline{\mathsf{X}}^{(1)}_0 - \overline{\mathsf{X}}^{(2)}_0\|_{L^1} + \int_0^t \int_{\Omega \times \widehat{\Omega}} \left| \mu(\overline{\mathsf{X}}^{(1)}_z(s)) - \mu(\overline{\mathsf{X}}^{(2)}_z(s)) \right| \, \mathrm{d}z \, \mathrm{d}s
\\
& \qquad + \int_0^t \underbrace{ \int_{\Omega \times \widehat{\Omega}} \left| \mathbb{E}_{z'} \Big[ \overline{\mathsf{W}}^{(1)}_{z,z'}(s) \sigma(\overline{\mathsf{X}}^{(1)}_z(s), \overline{\mathsf{X}}^{(1)}_{z'}(s)) - \overline{\mathsf{W}}^{(2)}_{z,z'}(s) \sigma(\overline{\mathsf{X}}^{(2)}_z(s), \overline{\mathsf{X}}^{(2)}_{z'}(s)) \Big] \right| \, \mathrm{d}z }_{\eqqcolon \mathcal{I}(s)} \, \mathrm{d}s.
\end{aligned}
\end{equation*}
The Brownian terms are identical for both processes and cancel out.
Since $\mu$ is Lipschitz, the drift term is bounded by $\|\nabla \mu\|_{L^\infty} \int_0^t \|\overline{\mathsf{X}}^{(1)} - \overline{\mathsf{X}}^{(2)}\|_{s, L^1} \, \mathrm{d}s$.
For the interaction term $\mathcal{I}(s)$, we add and subtract a cross term to decompose the integrand into two parts, and write the expectation $\mathbb{E}_{z'}$ as an integral $\int \dots \mathrm{d}z'$:
\begin{equation*}
\begin{aligned}
\mathcal{I}(s) &= \int_{\Omega \times \widehat{\Omega}} \bigg| \int_{\Omega \times \widehat{\Omega}} \Big( \Delta_1(s, z, z') + \Delta_2(s, z, z') \Big) \, \mathrm{d}z' \bigg| \, \mathrm{d}z,
\end{aligned}
\end{equation*}
where
\begin{equation*}
\begin{aligned}
\Delta_1 = \overline{\mathsf{W}}^{(1)}_{z,z'} \Big( \sigma(\overline{\mathsf{X}}^{(1)}_z, \overline{\mathsf{X}}^{(1)}_{z'}) - \sigma(\overline{\mathsf{X}}^{(2)}_z, \overline{\mathsf{X}}^{(2)}_{z'}) \Big),
\quad
\Delta_2 = \Big( \overline{\mathsf{W}}^{(1)}_{z,z'} - \overline{\mathsf{W}}^{(2)}_{z,z'} \Big) \sigma(\overline{\mathsf{X}}^{(2)}_z, \overline{\mathsf{X}}^{(2)}_{z'}).
\end{aligned}
\end{equation*}
For $\Delta_1$, using $\|\overline{\mathsf{W}}^{(1)}\|_{t, L^\infty} \leq C$ (depending on time, Assumptions~\ref{assu:coefficients} and $\|\overline{\mathsf{W}}_0\|_{L^\infty}$ from the first part a priori moment estimate) and the Lipschitz continuity of $\sigma$, we obtain:
\begin{equation*}
\begin{aligned}
\int \left| \int \Delta_1 \, \mathrm{d}z' \right| \, \mathrm{d}z 
&\leq \|\overline{\mathsf{W}}^{(1)}\|_{t, L^\infty} \|\nabla \sigma\|_{L^\infty} \int_{\Omega \times \widehat{\Omega}} \int_{\Omega \times \widehat{\Omega}} \Big( |\overline{\mathsf{X}}^{(1)}_z - \overline{\mathsf{X}}^{(2)}_z| + |\overline{\mathsf{X}}^{(1)}_{z'} - \overline{\mathsf{X}}^{(2)}_{z'}| \Big) \, \mathrm{d}z' \, \mathrm{d}z
\\
&= 2 \|\overline{\mathsf{W}}^{(1)}\|_{t, L^\infty} \|\nabla \sigma\|_{L^\infty} \|\overline{\mathsf{X}}^{(1)} - \overline{\mathsf{X}}^{(2)}\|_{s, L^1}.
\end{aligned}
\end{equation*}
For $\Delta_2$, we exploit the integral separable structure of $\sigma$ (Assumption~\ref{assu:coefficients}) to isolate the cut norm.
Substituting $\sigma(x,y) = \int_E \sigma_1(x, \lambda) \sigma_2(y, \lambda) \, \mathrm{d}\lambda$, the term becomes:
\begin{equation*}
\begin{aligned}
\int_{\Omega \times \widehat{\Omega}} \left| \int_{\Omega \times \widehat{\Omega}} \Delta_2 \, \mathrm{d}z' \right| \, \mathrm{d}z
= \int_{\Omega \times \widehat{\Omega}} \bigg| \int_E \sigma_1(\overline{\mathsf{X}}^{(2)}_z, \lambda) \bigg( \int_{\Omega \times \widehat{\Omega}} \big( \overline{\mathsf{W}}^{(1)}_{z,z'} - \overline{\mathsf{W}}^{(2)}_{z,z'} \big) \sigma_2(\overline{\mathsf{X}}^{(2)}_{z'}, \lambda) \, \mathrm{d}z' \bigg) \, \mathrm{d}\lambda \bigg| \, \mathrm{d}z.
\end{aligned}
\end{equation*}
Considering the direction of the inequality, we move the integral over $E$ outside the absolute value.
Let $K(z,z') \coloneqq \overline{\mathsf{W}}^{(1)}_{z,z'}(s) - \overline{\mathsf{W}}^{(2)}_{z,z'}(s)$ and $f_\lambda(z') \coloneqq \sigma_2(\overline{\mathsf{X}}^{(2)}_{z'}(s), \lambda)$.
The inner integral over $z'$ can be recognized as the integral operator $T_K$ acting on the function $f_\lambda$.
Specifically,
\begin{equation*}
\begin{aligned}
\int_{\Omega \times \widehat{\Omega}} \bigg| \int_{\Omega \times \widehat{\Omega}} K(z,z') f_\lambda(z') \, \mathrm{d}z' \bigg| \, \mathrm{d}z 
&= \| T_K(f_\lambda) \|_{L^1} \leq \| T_K \|_{L^\infty \to L^1} \|f_\lambda\|_{L^\infty}.
\end{aligned}
\end{equation*}
Using the standard cut norm inequality $\|T_K\|_{L^\infty \to L^1} \leq 4 \|K\|_{\square}$ (see \cite[Lemma 8.11]{lovasz2012large}), we have
\begin{equation*}
\| T_K(f_\lambda) \|_{L^1} \leq 4 \|\overline{\mathsf{W}}^{(1)}(s) - \overline{\mathsf{W}}^{(2)}(s)\|_{\square(\Omega \times \widehat{\Omega})} \, \|\sigma_2(\cdot, \lambda)\|_{L^\infty}.
\end{equation*}
Finally, integrating over $\lambda \in E$ yields the bound for the $\Delta_2$ term:
\begin{equation*}
\int_{\Omega \times \widehat{\Omega}} \left| \int_{\Omega \times \widehat{\Omega}} \Delta_2 \, \mathrm{d}z' \right| \, \mathrm{d}z \leq 4 \left( \int_E \|\sigma_1(\cdot, \lambda)\|_{L^\infty} \|\sigma_2(\cdot, \lambda)\|_{L^\infty} \, \mathrm{d}\lambda \right) \|\overline{\mathsf{W}}^{(1)} - \overline{\mathsf{W}}^{(2)}\|_{s, \square}.
\end{equation*}

\textit{Step 2: Estimate for the weight component $\Psi^{\mathsf{W}}$.}
We can apply the same separation of variables technique for $\alpha$.
We rely on the following property of the cut norm: for functions $u = u(z)$, $w = w(z,z')$, and $v = v(z')$, we have
\begin{equation*}
\|u w v\|_{\square} \leq 4 \|u\|_{L^\infty} \|v\|_{L^\infty} \|w\|_{\square}.
\end{equation*}
Applying this property to handle the $\alpha \overline{\mathsf{W}}$ product, and bounding the cut norm by the $L^1$ norm for the Lipschitz differences of $\alpha$ and $\beta$, we directly obtain the following estimate:
\begin{equation*}
\begin{aligned}
& \|\Psi^{\mathsf{W}}(\overline{\mathcal{Y}}^{(1)}) - \Psi^{\mathsf{W}}(\overline{\mathcal{Y}}^{(2)})\|_{t, \square}
\\
& \quad \leq \|\overline{\mathsf{W}}^{(1)}_0 - \overline{\mathsf{W}}^{(2)}_0\|_{\square}
+ \int_0^t 2 \Big( \|\nabla \beta\|_{L^\infty} + \|\nabla \alpha\|_{L^\infty} \|\overline{\mathsf{W}}^{(2)}\|_{t, L^\infty} \Big) \|\overline{\mathsf{X}}^{(1)} - \overline{\mathsf{X}}^{(2)}\|_{s, L^1} \, \mathrm{d}s
\\
& \qquad + 4 \left( \int_E \|\alpha_1(\cdot, \lambda)\|_{L^\infty} \|\alpha_2(\cdot, \lambda)\|_{L^\infty} \, \mathrm{d}\lambda \right) \int_0^t \|\overline{\mathsf{W}}^{(1)} - \overline{\mathsf{W}}^{(2)}\|_{s, \square} \, \mathrm{d}s.
\end{aligned}
\end{equation*}
Combining the estimates from Step 1 and Step 2 yields the desired Lipschitz inequality.

\end{proof}

With the key estimates established, we complete the proof of the main proposition.

\begin{proof}[Proof of Proposition~\ref{prop:well_posedness_McKean_Vlasov}]

\textit{Existence and Uniqueness.}
Using the moment propagation estimates from Lemma~\ref{lem:picard_estimates}, we can construct invariant sets for the integral operator $\Psi$. 
For initial data $(\overline{\mathsf{X}}_0, \overline{\mathsf{W}}_0)$ with bounded moments, consider the set $\mathcal{M}$ consisting of pairs $(\overline{\mathsf{X}}, \overline{\mathsf{W}})$ satisfying the uniform bounds for all $t \in [0,T]$:
\begin{align*}
& \|\overline{\mathsf{W}}(t)\|_{L^\infty} \leq e^{\|\alpha\|_{L^\infty}t} \left( \|\overline{\mathsf{W}}_0\|_{L^\infty} + \|\beta\|_{L^\infty}t \right), \\
& \|\overline{\mathsf{X}}(t)\|_{L^2}^2 \leq 4 \bigg( \|\overline{\mathsf{X}}_0\|_{L^2}^2 + \|\mu\|_{L^\infty}^2 t^2 + d \nu^2 t + \left[ e^{\|\alpha\|_{L^\infty}t} \left( \|\overline{\mathsf{W}}_0\|_{L^\infty} + \|\beta\|_{L^\infty}t \right) \right]^2 \|\sigma\|_{L^\infty}^2 t^2 \bigg).
\end{align*}
The invariance of $\mathcal{M}$ under $\Psi$ follows directly from the moment estimates of Lemma~\ref{lem:picard_estimates}. 
Within this controlled regime, we implement a standard Picard iteration scheme. 
From the Lipschitz estimate of Lemma~\ref{lem:picard_estimates}, the operator $\Psi$ is a contraction on $[0, \delta]$ for sufficiently small $\delta > 0$. 
We then iteratively extend the solution to the full interval $[0, T]$, ensuring global existence and uniqueness while maintaining the uniform bounds provided by $\mathcal{M}$.

\textit{Stability.}
Let $\overline{\mathcal{Y}}^{(k)} = (\overline{\mathsf{X}}^{(k)}, \overline{\mathsf{W}}^{(k)})$ ($k=1,2$) be solutions starting with initial data $\overline{\mathcal{Y}}_0^{(k)}$ respectively.
Since they are fixed points of $\Psi$, we apply the Lipschitz estimate of Lemma~\ref{lem:picard_estimates} to obtain:
\begin{equation*}
\Delta(t) \leq \Delta(0) + C(t) \int_0^t \Delta(s) \, \mathrm{d}s,
\end{equation*}
where $\Delta(t) \coloneqq \|\overline{\mathsf{X}}^{(1)} - \overline{\mathsf{X}}^{(2)}\|_{t, L^1} + \|\overline{\mathsf{W}}^{(1)} - \overline{\mathsf{W}}^{(2)}\|_{t, \square}$ (assuming they share the same latent space).
Remarkably, this constant $C(t)$, inherited from the Lipschitz estimate of Lemma~\ref{lem:picard_estimates}, is independent of the initial state moment $\|\overline{\mathsf{X}}_0\|_{L^2}$.
Grönwall's inequality immediately yields the labeled stability: $\Delta(t) \leq e^{C(t)t} \Delta(0)$.

For the unlabeled stability, we take the infimum over all measure-preserving maps $\varphi_1, \varphi_2$ on both sides. 
Since the dynamics commute with relabeling (i.e., $\Psi(\overline{\mathcal{Y}}_\varphi) = \Psi(\overline{\mathcal{Y}})_\varphi$), the optimal coupling at $t=0$ propagates to time $t$, preserving the inequality for the unlabeled distance $\delta_{W_1, \square}$.
\end{proof}

\subsection{Proof of propagation of dissociatedness (Theorem~\ref{thm:propagation_of_independent_mixture})} \label{subsec:proof_propagation}

The proof strategy of Theorem~\ref{thm:propagation_of_independent_mixture} is closely related to the classical propagation of chaos arguments, such as those in \cite{sznitman1991topics}.
The core idea typically involves decomposing the error into a term controlled by Lipschitz continuity (often handled via Grönwall's inequality) and a fluctuation term of order $N^{-1/2}$ arising from independence.
For our setting, once an appropriate filtered probability space and coupling are defined, the subsequent calculations largely mirror the classical approach.

\begin{proof}[Proof of Theorem~\ref{thm:propagation_of_independent_mixture}]
The proof relies on a pathwise coupling between the finite system \eqref{eqn:main_SDEs} and the auxiliary processes, as outlined in Remark~\ref{rmk:finite_sampling}.
We consider the product probability space that explicitly supports both the random initial configurations and the driving Brownian motions for all $N$ agents:
\begin{equation*}
\begin{aligned}
\Big( (\Omega \times \widehat{\Omega})^N, (\mathcal{A} \otimes \widehat{\mathcal{F}})^{\otimes N}, \bigotimes_{i=1}^N (\pi_i \otimes \widehat{\pi} ) \Big).
\end{aligned}
\end{equation*}
Recall that the measures $(\pi_i)_{i\in[N]}$ satisfy the averaging condition $\frac{1}{N}\sum_{i=1}^N \pi_i = \pi$.
Consequently, the measure $\pi \otimes \widehat{\pi}$ on the limit space decomposes as $\frac{1}{N} \sum_{j=1}^N (\pi_j \otimes \widehat{\pi})$.
Using this, the interaction term in the McKean-Vlasov equation \eqref{eqn:main_McKean_Vlasov_X} can be rewritten as an average of expectations over the individual agent laws.
Specifically, for any $(\xi, \widehat{\xi}) \in \Omega \times \widehat{\Omega}$:
\begin{equation*}
\begin{aligned}
\mathrm{d}\overline{\mathsf{X}}_{\xi, \widehat{\xi}} &= \mu(\overline{\mathsf{X}}_{\xi, \widehat{\xi}}) \mathrm{d}t + \nu \mathrm{d}\overline{\mathsf{B}}_{\widehat{\xi}} + \frac{1}{N} \sum_{j \in [N]} \mathbb{E}_{(\zeta_j, \widehat{\zeta}_j) \sim \pi_j \otimes \widehat{\pi}} \Big[ \overline{\mathsf{W}}_{(\xi, \widehat{\xi}), (\zeta_j, \widehat{\zeta}_j)} \sigma(\overline{\mathsf{X}}_{\xi, \widehat{\xi}}, \overline{\mathsf{X}}_{\zeta_j, \widehat{\zeta}_j}) \Big] \mathrm{d}t.
\end{aligned}
\end{equation*}
Recall that the auxiliary processes $(\overline{\bm{X}}, \overline{\bm{W}})$ are defined via sampling in \eqref{eqn:sampling_definition}, restated here for convenience:
\begin{equation*}
\begin{aligned}
\overline{\bm{X}}_i(t) \coloneqq \overline{\mathsf{X}}_{\bm{\xi}_i, \widehat{\bm{\xi}}_i}(t), \qquad \overline{\bm{W}}_{i,j}(t) \coloneqq \overline{\mathsf{W}}_{(\bm{\xi}_i, \widehat{\bm{\xi}}_i), (\bm{\xi}_j, \widehat{\bm{\xi}}_j)}(t).
\end{aligned}
\end{equation*}
In this setup, for each agent $i \in [N]$, the measure $\pi_i$ on $\Omega$ governs the sampling of the initial structural label $\bm{\xi}_i$, while $\widehat{\pi}$ on $\widehat{\Omega}$ governs the canonical Brownian path $\widehat{\bm{\xi}}_i$ (associated with $\bm{B}_i$).

In the subsequent analysis, let $\bm{z}_i \coloneqq (\bm{\xi}_i, \widehat{\bm{\xi}}_i)$ denote the random variable for the $i$-th agent.
We also introduce independent copies $\bm{z}'_j \coloneqq (\bm{\zeta}_j, \widehat{\bm{\zeta}}_j) \sim \pi_j \otimes \widehat{\pi}$.
Crucially, $\bm{z}'_j$ is independent of the realization $\bm{z}_i$ coupled to the finite system, serving as the integration variable to ensure the expectation is taken over the law.

Subtracting the auxiliary dynamics from the finite system dynamics, we start with the weight component $\bm{W}$. 
Noting that $\overline{\bm{W}}_{i,j}(t) = \overline{\mathsf{W}}_{\bm{z}_i, \bm{z}_j}(t)$ and that the adaptation is local, we have for any $i \neq j$:
\begin{equation*}
\begin{aligned}
& \bm{W}_{i,j}(t) - \overline{\bm{W}}_{i,j}(t)
\\
& \quad = \int_0^t \Big( \alpha(\bm{X}_i(s), \bm{X}_j(s)) \bm{W}_{i,j}(s) - \alpha(\overline{\bm{X}}_i(s), \overline{\bm{X}}_j(s)) \overline{\bm{W}}_{i,j}(s) \Big) \mathrm{d}s
\\
& \qquad + \int_0^t \Big( \beta(\bm{X}_i(s), \bm{X}_j(s)) - \beta(\overline{\bm{X}}_i(s), \overline{\bm{X}}_j(s)) \Big) \mathrm{d}s.
\end{aligned}
\end{equation*}
It is straightforward to verify (by adding and subtracting cross terms and applying the triangle inequality) that
\begin{equation*}
\begin{aligned}
& | \bm{W}_{i,j}(t) - \overline{\bm{W}}_{i,j}(t)|
\\
& \quad \leq \Big( \|\nabla \beta\|_{L^\infty} + \|\nabla \alpha\|_{L^\infty} \|\overline{\mathsf{W}}\|_{L^\infty} \Big) \int_0^t \Big( | \bm{X}_i(s) - \overline{\bm{X}}_i(s) | + | \bm{X}_j(s) - \overline{\bm{X}}_j(s) | \Big) \mathrm{d}s
\\
& \qquad + \|\alpha\|_{L^\infty} \int_0^t | \bm{W}_{i,j}(s) - \overline{\bm{W}}_{i,j}(s)| \mathrm{d}s,
\end{aligned}
\end{equation*}
where we use the shorthand notation $\|\overline{\mathsf{W}}\|_{L^\infty}$ for the norm $\|\overline{\mathsf{W}}\|_{L^\infty([0,T] \times (\Omega \times \widehat{\Omega})^2)}$, whose a priori bound was established in Proposition~\ref{prop:well_posedness_McKean_Vlasov}.

Next, we turn to the state component $\bm{X}$.
Subtracting the auxiliary dynamics from the finite system dynamics, the Brownian motion terms cancel out.
We decompose the error into four distinct terms by adding and subtracting appropriate intermediate quantities:
\begin{equation*}
\begin{aligned}
\bm{X}_i(t) - \overline{\bm{X}}_i(t) = \int_0^t \Big( \mathcal{T}_{i,1}(s) + \mathcal{T}_{i,2}(s) + \mathcal{T}_{i,3}(s) + \mathcal{T}_{i,4}(s) \Big) \, \mathrm{d}s.
\end{aligned}
\end{equation*}
The first two terms capture the error propagation due to the Lipschitz continuity of the coefficients:
\begin{align*}
\mathcal{T}_{i,1}(s) &\coloneqq \mu(\bm{X}_i(s)) - \mu(\overline{\bm{X}}_i(s)),
\\
\mathcal{T}_{i,2}(s) &\coloneqq \frac{1}{N} \sum_{j \in [N] \setminus \{i\}} \Big( \bm{W}_{i,j}(s) \sigma(\bm{X}_i(s), \bm{X}_j(s)) - \overline{\bm{W}}_{i,j}(s) \sigma(\overline{\bm{X}}_i(s), \overline{\bm{X}}_j(s)) \Big).
\end{align*}
The third term represents the statistical fluctuation arising from the finite sampling of the auxiliary environment.
Crucially, for a fixed agent $i$, the summands indexed by $j$ are independent and centered:
\begin{equation*}
\mathcal{T}_{i,3}(s) \coloneqq \frac{1}{N} \sum_{j \in [N] \setminus \{i\}} \bigg( \overline{\bm{W}}_{i,j}(s) \sigma(\overline{\bm{X}}_i(s), \overline{\bm{X}}_j(s)) - \mathbb{E}_{\bm{z}'_j} \Big[ \overline{\mathsf{W}}_{\bm{z}_i, \bm{z}'_j}(s) \sigma(\overline{\mathsf{X}}_{\bm{z}_i}(s), \overline{\mathsf{X}}_{\bm{z}'_j}(s)) \Big] \bigg).
\end{equation*}
The final term accounts for the diagonal bias, resulting from the difference between the sum over $j \neq i$ and the full expectation average:
\begin{equation*}
\mathcal{T}_{i,4}(s) \coloneqq - \frac{1}{N} \mathbb{E}_{\bm{z}'_i} \Big[ \overline{\mathsf{W}}_{\bm{z}_i, \bm{z}'_i}(s) \sigma(\overline{\mathsf{X}}_{\bm{z}_i}(s), \overline{\mathsf{X}}_{\bm{z}'_i}(s)) \Big].
\end{equation*}

We first estimate the Lipschitz propagation terms $\mathcal{T}_{i,1}$ and $\mathcal{T}_{i,2}$.
By Assumption~\ref{assu:coefficients} (Lipschitz coefficients) and the uniform boundedness of the trajectories on $[0,T]$ (which ensures $\|\bm{W}\|_{L^\infty}$ is finite), we obtain the following pointwise bounds:
\begin{equation*}
\begin{aligned}
|\mathcal{T}_{i,1}(s)| &\leq \|\nabla \mu\|_{L^\infty} |\bm{X}_i(s) - \overline{\bm{X}}_i(s)|,
\\
|\mathcal{T}_{i,2}(s)| &\leq \frac{1}{N} \sum_{j \in [N] \setminus \{i\}} \Big( \|\sigma\|_{L^\infty} |\bm{W}_{i,j}(s) - \overline{\bm{W}}_{i,j}(s)| 
\\
& \qquad \qquad \qquad + \|\bm{W}\|_{L^\infty} \|\nabla \sigma\|_{L^\infty} \big( |\bm{X}_i(s) - \overline{\bm{X}}_i(s)| + |\bm{X}_j(s) - \overline{\bm{X}}_j(s)| \big) \Big)
\end{aligned}
\end{equation*}

For the fluctuation term $\mathcal{T}_{i,3}$, we employ the standard conditional independence argument found in the propagation of chaos literature (e.g., \cite{sznitman1991topics}).
Let us denote the summand by:
\begin{equation*}
\bm{V}_{i,j}(s) \coloneqq \overline{\mathsf{W}}_{\bm{z}_i, \bm{z}_j}(s) \sigma(\overline{\mathsf{X}}_{\bm{z}_i}(s), \overline{\mathsf{X}}_{\bm{z}_j}(s)) - \mathbb{E}_{\bm{z}'_j} \Big[ \overline{\mathsf{W}}_{\bm{z}_i, \bm{z}'_j}(s) \sigma(\overline{\mathsf{X}}_{\bm{z}_i}(s), \overline{\mathsf{X}}_{\bm{z}'_j}(s)) \Big].
\end{equation*}
Fix $i \in [N]$. Conditioned on the realization $\bm{z}_i$, the term $\bm{V}_{i,j}(s)$ depends solely on $\bm{z}_j$.
Since the auxiliary variables $(\bm{z}_j)_{j \neq i}$ are mutually independent and distributed according to $\pi_j \otimes \widehat{\pi}$ (matching the law of the integration variable $\bm{z}'_j$), the family $\{\bm{V}_{i,j}(s)\}_{j \in [N] \setminus \{i\}}$ consists of independent random variables with zero mean.
Consequently, when computing the conditional second moment of the sum, all cross-terms vanish:
\begin{equation*}
\begin{aligned}
\mathbb{E} \bigg[ \bigg( \sum_{j \in [N] \setminus \{i\}} \bm{V}_{i,j}(s) \bigg)^2 \, \Big| \, \bm{z}_i \bigg] 
= \sum_{j \in [N] \setminus \{i\}} \mathbb{E} \Big[ |\bm{V}_{i,j}(s)|^2 \, \Big| \, \bm{z}_i \Big]
\leq N \cdot \Big( 2 \|\overline{\mathsf{W}}\|_{L^\infty} \|\sigma\|_{L^\infty} \Big)^2.
\end{aligned}
\end{equation*}
Using Hölder's inequality to bound the expected absolute value by the square root of the second moment, we obtain the estimate for the integral:
\begin{equation*}
\begin{aligned}
\mathbb{E} \bigg[ \int_0^t |\mathcal{T}_{i,3}(s)| \, \mathrm{d}s \bigg]
&= \int_0^t \mathbb{E} \bigg[ \mathbb{E} \Big[ |\mathcal{T}_{i,3}(s)| \, \Big| \, \bm{z}_i \Big] \bigg] \, \mathrm{d}s
\\
&\leq \int_0^t \frac{1}{N} \mathbb{E} \bigg[ \bigg( \mathbb{E} \bigg[ \bigg( \sum_{j \in [N] \setminus \{i\}} \bm{V}_{i,j}(s) \bigg)^2 \, \Big| \, \bm{z}_i \bigg] \bigg)^{\frac{1}{2}} \bigg] \, \mathrm{d}s
\\
&\leq \int_0^t \frac{1}{N} \Big( N \cdot 4 \|\overline{\mathsf{W}}\|_{L^\infty}^2 \|\sigma\|_{L^\infty}^2 \Big)^{\frac{1}{2}} \, \mathrm{d}s
\\
&= 2 \|\overline{\mathsf{W}}\|_{L^\infty} \|\sigma\|_{L^\infty} \frac{t}{\sqrt{N}}.
\end{aligned}
\end{equation*}
Finally, for the diagonal term $\mathcal{T}_{i,4}$, the bound is straightforward due to the boundedness of the coefficients and the scaling factor:
\begin{equation*}
\int_0^t |\mathcal{T}_{i,4}(s)| \, \mathrm{d}s \leq \|\overline{\mathsf{W}}\|_{L^\infty} \|\sigma\|_{L^\infty} \frac{t}{N}.
\end{equation*}

Combining all the estimates above, we establish a Grönwall-type inequality for the maximum error.
Define the global error function:
\begin{equation*}
\begin{aligned}
G(t) \coloneqq \max\bigg\{ \sup_{i \in [N]} \mathbb{E} \Big[ |\bm{X}_i(t) - \overline{\bm{X}}_i(t)| \Big] , \quad \sup_{i \neq j} \mathbb{E} \Big[ |\bm{W}_{i,j}(t) - \overline{\bm{W}}_{i,j}(t)| \Big] \bigg\}.
\end{aligned}
\end{equation*}
From the estimates for $\mathcal{T}_{i,1}$ through $\mathcal{T}_{i,4}$ and the weight error dynamics, we have:
\begin{equation*}
\begin{aligned}
G(t) \leq \int_0^t \bigg( C_1 G(s) + \frac{C_2}{\sqrt{N}} \bigg) \, \mathrm{d}s,
\end{aligned}
\end{equation*}
where the constants can be chosen explicitly as:
\begin{equation*}
\begin{aligned}
C_1 &= (2 \|\nabla \alpha\|_{L^\infty} + 2\|\nabla \sigma\|_{L^\infty}) \|\overline{\mathsf{W}}\|_{L^\infty} + \|\alpha\|_{L^\infty} + 2 \|\nabla \beta\|_{L^\infty} 
+ \|\nabla \mu\|_{L^\infty} + \|\sigma\|_{L^\infty},
\\
C_2 &= 3\|\overline{\mathsf{W}}\|_{L^\infty} \|\sigma\|_{L^\infty}.
\end{aligned}
\end{equation*}
(Note that for $C_2$, we used the bound $N^{-1} \leq N^{-1/2}$ to combine the diagonal term with the fluctuation term).
Consequently, by Grönwall's inequality, we obtain the convergence rate:
\begin{equation*}
\begin{aligned}
G(t) \leq \frac{\exp(C_1 t) C_2 t}{\sqrt{N}}.
\end{aligned}
\end{equation*}
This completes the proof of Theorem~\ref{thm:propagation_of_independent_mixture}.

\end{proof}

\subsection{Proof of empirical stability (Theorem~\ref{thm:empirical_data_stability})} \label{subsec:proof_empirical}

In this subsection, we implement the proof of Theorem~\ref{thm:empirical_data_stability} by combining all the estimates derived previously, together with the Sampling Lemma (Lemma~\ref{lem:sampling_lemma}) whose proof is deferred to Appendix~\ref{subsec:proof_sampling}.

\begin{proof}[Proof of Theorem~\ref{thm:empirical_data_stability}]
Let $\bm{\mathcal{Y}}_N(t)$ be the solution to the finite system \eqref{eqn:main_SDEs} and $\overline{\mathcal{Y}}(t)$ be the target limit solution to \eqref{eqn:main_McKean_Vlasov}. 
We first proceed under the additional assumption that the initial configuration of $\bm{\mathcal{Y}}_N$ satisfies the dissociatedness property, bridging the gap relies on the two aforementioned objects:
\begin{enumerate}
\item The \textit{limit flow} $\overline{\mathcal{Y}}_N(t) = (\overline{\mathsf{X}}_N(t),\overline{\mathsf{W}}_N(t))$, solving \eqref{eqn:main_McKean_Vlasov} initialized from the Aldous-Hoover representation of $\bm{\mathcal{Y}}_N(0) = ((\bm{X}_{i}(0))_{i \in [N]}, (\bm{W}_{i,j}(0))_{i,j \in [N]})$.
\item The \textit{auxiliary particle system} $\overline{\bm{\mathcal{Y}}}_N(t)$, defined as the sampling of $\overline{\mathcal{Y}}_N(t)$ (coupled with $\bm{\mathcal{Y}}_N(t)$ via the same initial data and Brownian motions).
\end{enumerate}
By the triangle inequality, the total error admits the following decomposition, where each term corresponds to an explicit bound in our discussion:
\begin{equation}\label{eqn:error_decomposition}
\begin{aligned}
& \mathbb{E} \Big[ \delta_{W_1, \square}\big( \bm{\mathcal{Y}}_N(t), \overline{\mathcal{Y}}(t) \big) \Big] 
\\
\quad &\leq \underbrace{\mathbb{E} \Big[ \delta_{W_1, \square}\big( \bm{\mathcal{Y}}_N(t), \overline{\bm{\mathcal{Y}}}_N(t) \big) \Big]}_{\substack{\text{Thm.~\ref{thm:propagation_of_independent_mixture} - Sec.~\ref{subsec:proof_propagation}} \\ \text{Propagation of Dissociatedness}}} + \underbrace{\mathbb{E} \Big[ \delta_{W_1, \square}\big( \overline{\bm{\mathcal{Y}}}_N(t), \overline{\mathcal{Y}}_N(t) \big) \Big]}_{\substack{\text{Lem.~\ref{lem:sampling_lemma} - Appendix~\ref{subsec:proof_sampling}} \\ \text{Sampling Lemma}}}
+ \underbrace{\vphantom{\Big[} \delta_{W_1, \square}\big( \overline{\mathcal{Y}}_N(t), \overline{\mathcal{Y}}(t) \big) }_{\substack{\text{Prop.~\ref{prop:well_posedness_McKean_Vlasov} - Sec.~\ref{subsec:proof_well_posedness}} \\ \text{McKean-Vlasov Stability}}}
\\
\quad &\leq \frac{C_1(t)}{\sqrt{N}} + \frac{C_2(t)}{\sqrt{\log N}} + C_3(t) \delta_{W_1, \square}\big( \overline{\mathcal{Y}}_N(0), \overline{\mathcal{Y}}(0) \big).
\end{aligned}
\end{equation}
This estimate is stronger than \eqref{eqn:empirical_data_stability} in Theorem~\ref{thm:empirical_data_stability}.

To establish \eqref{eqn:empirical_data_stability} for general initial data (where the initial configuration of $\bm{\mathcal{Y}}_N$ may not be dissociated), we proceed by conditioning on the initial realization $\bm{\mathcal{Y}}_N(0)$. 
Let $\mathcal{F}_0$ denote the $\sigma$-algebra generated by the initial configuration.
Conditional on $\mathcal{F}_0$, the initial data is deterministic. 
Recall that any deterministic configuration on a set of $N$ agents is trivially dissociated and can be viewed as an Aldous-Hoover representation on the probability space $\Omega = [N]$ equipped with the uniform measure. 
Consequently, the error decomposition derived above holds pathwise for the conditional expectation:
\begin{equation*}
\begin{aligned}
\mathbb{E} \Big[ \delta_{W_1, \square}\big( \bm{\mathcal{Y}}_N(t), \overline{\mathcal{Y}}(t) \big) \,\Big|\, \mathcal{F}_0 \Big] 
\leq \frac{\bm{C}_1(t)}{\sqrt{N}} + \frac{\bm{C}_2(t)}{\sqrt{\log N}} + \bm{C}_3(t) \delta_{W_1, \square}\big( \bm{\mathcal{Y}}_N(0), \overline{\mathcal{Y}}(0) \big).
\end{aligned}
\end{equation*}

To derive the final estimate \eqref{eqn:empirical_data_stability}, we take the full expectation on both sides of \eqref{eqn:error_decomposition}.
However, we must verify that the coefficients $\bm{C}_1(t)$, $\bm{C}_2(t)$, and $\bm{C}_3(t)$, which now depend on the specific realization of the initial moments, remain bounded after taking expectation.

First, regarding $\bm{C}_1(t)$ (from the propagation of dissociatedness in Section~\ref{subsec:proof_propagation}) and $\bm{C}_3(t)$ (from the McKean-Vlasov stability in Section~\ref{subsec:proof_well_posedness}), we observe that they depend on the initial data primarily through the $L^\infty$-bound of the weights, but are independent of the $L^2$-moment of the states.
By Assumption~\ref{assu:initial_data}, the initial weights are uniformly bounded almost surely.
Consequently, $\bm{C}_1(t)$ and $\bm{C}_3(t)$ are uniform with respect to the randomness of the initial configuration.

Second, regarding the sampling error term, Lemma~\ref{lem:sampling_lemma} indicates that the coefficient $\bm{C}_2(t)$ takes the form
\begin{equation*}
C \left( \|\overline{\mathsf{X}}_N(t)\|_{L^2} + \|\overline{\mathsf{W}}_N(t)\|_{L^\infty} \right) = C \ \mathbb{E} \left[ \left( \frac{1}{N} \sum_{i=1}^N |\overline{\bm{X}}_i(t)|^2 \right)^{1/2} + \max_{i,j} |\overline{\bm{W}}_{i,j}(t)| \,\bigg|\, \mathcal{F}_0 \right].
\end{equation*}
Under Assumption~\ref{assu:initial_data} and the moments propagation in Section~\ref{subsec:proof_well_posedness}, the second moment of the states and the infinity norm of the weights are bounded in full expectation.
Therefore, taking the full expectation of \eqref{eqn:error_decomposition} yields the bound \eqref{eqn:empirical_data_stability}, completing the proof.

\end{proof}

\appendix



\section{Graphon Theory Tools (Proof of Sampling Lemma~\ref{lem:sampling_lemma})} \label{subsec:proof_sampling}

The proof presented here is a natural extension of the argument for the Sampling Lemma found in the textbook \cite{lovasz2012large}. We include it for the completeness of this article.

The primary modifications involve our consideration of sampling via independent latent variables which are not necessarily identically distributed. Additionally, we need to address the presence of the state function $\mathsf{X}: \Omega \to \mathbb{D}$ alongside the weight kernel $\mathsf{W}: \Omega \times \Omega \to \mathbb{R}$. However, these adjustments do not alter the fundamental logic of the combinatorial arguments or the large deviation estimates.

Recall the setting of Lemma~\ref{lem:sampling_lemma}. We consider a sequence of $N$ independent latent variables $\bm{\xi} = (\bm{\xi}_i)_{i \in [N]}$ taking values in $\Omega$. Their distributions $\pi_i = \mathrm{Law}(\bm{\xi}_i)$ are assumed to be absolutely continuous with respect to $\pi$ and satisfy the averaging condition $\frac{1}{N}\sum_{i=1}^N \pi_i = \pi$.
The sampled empirical structure $\bm{\mathcal{Y}}_N = (\bm{X}, \bm{W})$ is generated by:
\begin{equation*}
    \bm{X}_i = \mathsf{X}(\bm{\xi}_i), \quad \bm{W}_{i,j} = \mathsf{W}(\bm{\xi}_i, \bm{\xi}_j).
\end{equation*}
For notational conciseness, we denote the collection of these sampled random variables by:
\begin{equation*}
    \mathsf{X}_{[\bm{\xi}]} \coloneqq (\mathsf{X}(\bm{\xi}_i))_{i \in [N]}, \quad \mathsf{W}_{[\bm{\xi}]} \coloneqq (\mathsf{W}(\bm{\xi}_i, \bm{\xi}_j))_{i,j \in [N]}.
\end{equation*}
The proof strategy relies on introducing an intermediate piecewise constant approximation. Let $\mathcal{P} = \{S_1, \dots, S_m\}$ be a partition of the latent space $\Omega$ into $m$ measurable sets with positive measures. We define the piecewise constant projections $\mathsf{X}_{\mathcal{P}}: \Omega \to \mathbb{D}$ and $\mathsf{W}_{\mathcal{P}}: \Omega \times \Omega \to \mathbb{R}$ by averaging the original functions over the partition blocks:
\begin{align*}
    \mathsf{X}_{\mathcal{P}}(\xi) &\coloneqq \sum_{k=1}^m \mathbbm{1}_{S_k}(\xi) \left( \frac{1}{\pi(S_k)} \int_{S_k} \mathsf{X}(\zeta) \, \mathrm{d}\pi(\zeta) \right), \\
    \mathsf{W}_{\mathcal{P}}(\xi, \zeta) &\coloneqq \sum_{k,l=1}^m \mathbbm{1}_{S_k}(\xi) \mathbbm{1}_{S_l}(\zeta) \left( \frac{1}{\pi(S_k)\pi(S_l)} \int_{S_k \times S_l} \mathsf{W}(\xi', \zeta') \, \mathrm{d}\pi(\xi') \mathrm{d}\pi(\zeta') \right).
\end{align*}
Correspondingly, we define the sampled versions of these projections, denoted by $\mathsf{X}_{\mathcal{P}[\bm{\xi}]}$ and $\mathsf{W}_{\mathcal{P}[\bm{\xi}]}$, by evaluating the \textit{already projected} functions at the sample points:
\begin{equation*}
    \mathsf{X}_{\mathcal{P}[\bm{\xi}]} \coloneqq (\mathsf{X}_{\mathcal{P}}(\bm{\xi}_i))_{i \in [N]}, \quad \mathsf{W}_{\mathcal{P}[\bm{\xi}]} \coloneqq (\mathsf{W}_{\mathcal{P}}(\bm{\xi}_i, \bm{\xi}_j))_{i,j \in [N]}.
\end{equation*}
This construction allows us to decompose the total sampling error via the triangle inequality:
\begin{equation} \label{eqn:sampling_decomposition}
\begin{aligned}
    \delta_{W_1, \square} \big( (\mathsf{X}_{[\bm{\xi}]}, \mathsf{W}_{[\bm{\xi}]}), (\mathsf{X}, \mathsf{W}) \big)
    &\leq \delta_{W_1, \square} \big( (\mathsf{X}_{[\bm{\xi}]}, \mathsf{W}_{[\bm{\xi}]}), (\mathsf{X}_{\mathcal{P}[\bm{\xi}]}, \mathsf{W}_{\mathcal{P}[\bm{\xi}]}) \big) \\
    &\quad + \delta_{W_1, \square} \big( (\mathsf{X}_{\mathcal{P}[\bm{\xi}]}, \mathsf{W}_{\mathcal{P}[\bm{\xi}]}), (\mathsf{X}_{\mathcal{P}}, \mathsf{W}_{\mathcal{P}}) \big) \\
    &\quad + \delta_{W_1, \square} \big( (\mathsf{X}_{\mathcal{P}}, \mathsf{W}_{\mathcal{P}}), (\mathsf{X}, \mathsf{W}) \big).
\end{aligned}
\end{equation}
The proof then proceeds by bounding each term: the last term represents the deterministic approximation error $(\mathsf{X}-\mathsf{X}_{\mathcal{P}}, \mathsf{W}-\mathsf{W}_{\mathcal{P}})$, which can be computed using the labeled distance; the first term is analogous in nature but involves the concentration of sampling for the residuals; and the middle term reduces to a combinatorial problem of comparing partition volumes $\pi(S_k)$ with their empirical counts.

The logic for the concentration of sampling follows the arguments in \cite{lovasz2012large} based on Azuma-Hoeffding bounds. However, our current focus is not on replicating the precise convergence rates found in the literature. Instead, to describe estimates that hold with overwhelming probability, we introduce the following notion:

\begin{defi}
If, for a sequence of random events $A_k$, $k \to \infty$, there exist constants $C_1 > 0, C_2 > 0,$ and $\theta > 0$ such that for all $k$ (or for all $k \geq k_0$ for some initial $k_0 \in \N$),
\begin{equation*}
\Pb(A_k) \geq 1 - C_1 \exp(-C_2 k^{\theta}),
\end{equation*}
we say that the sequence of events $A_k$, $k \to \infty$ is $k$-certain.
\end{defi}

With the definition of $k$-certainty, we can now state our adapted version of the first sampling lemma.

\begin{lem}[First Sampling Lemma adapted from \texorpdfstring{\cite[Lemma~10.6]{lovasz2012large}}{[RefName, Lemma 10.6]}]
\label{lem:first_sampling_kernels}
Let $\mathsf{W} \in L^\infty(\Omega^2)$ and $\mathsf{X} \in L^2(\Omega)$. Let $\bm{\xi} = (\bm{\xi}_i)_{i \in [N]} \in \Omega^N$ be a sequence of independent latent variables with distributions $\pi_i$ satisfying $\frac{1}{N}\sum_{i=1}^N \pi_i = \pi$. Then, it is $N$-certain that
\begin{equation*}
\Big| \|\mathsf{W}_{[\bm{\xi}]}\|_{\square} - \|\mathsf{W}\|_{\square} \Big| \leq \frac{C \|\mathsf{W}\|_{L^\infty}}{N^{1/4}}
\quad \text{and} \quad
\Big| \|\mathsf{X}_{[\bm{\xi}]}\|_{L^1} - \|\mathsf{X}\|_{L^1} \Big| \leq \frac{C \|\mathsf{X}\|_{L^2}}{N^{1/4}},
\end{equation*}
where $C > 0$ is a universal constant.
\end{lem}

\begin{proof}[Sketch of proof]
The proof for the cut norm concentration follows the argument of \cite[Lemma 10.6]{lovasz2012large} almost verbatim. Although the original lemma is stated for symmetric kernels and i.i.d. sampling, a step-by-step verification reveals that neither symmetry nor the identical distribution property is utilized in the derivation; the core concentration inequalities (e.g., Azuma-Hoeffding) rely solely on the independence of the samples.
For the state function $\mathsf{X}$, the quantity $\|\mathsf{X}_{[\bm{\xi}]}\|_{L^1}$ is simply the empirical average of independent random variables $|\mathsf{X}(\bm{\xi}_i)|$. While standard concentration results (e.g., related to the Central Limit Theorem) readily yield a tighter rate of order $O(N^{-1/2})$, we adopt the unified $N^{-1/4}$ bound here to match the cut norm estimate, which suffices for our definition of $N$-certainty.
\end{proof}

We now prove Lemma~\ref{lem:sampling_lemma} using Lemma~\ref{lem:first_sampling_kernels}.

\begin{proof}[Proof of Lemma~\ref{lem:sampling_lemma}]
Let $m = \lceil N^{1/4} \rceil$. One can construct (following the procedure in \cite[Lemma~9.15]{lovasz2012large}) an equipartition $\mathcal{P} = \{S_1,\dots, S_m\}$ of $\Omega$ into $m$ classes such that
\begin{equation*}
\begin{aligned}
\|\mathsf{W} - \mathsf{W}_{\mathcal{P}}\|_{\square} \leq \frac{8 \|\mathsf{W}\|_{L^\infty}}{\sqrt{\log N}}, \quad \|\mathsf{X} - \mathsf{X}_{\mathcal{P}}\|_{L^1} \leq \frac{8 \|\mathsf{X}\|_{L^2}}{\sqrt{\log N}}.
\end{aligned}
\end{equation*}
Applying the decomposition \eqref{eqn:sampling_decomposition} with this specific partition $\mathcal{P}$, we proceed to bound the three resulting terms.
For the last term, since the distance $\delta_{W_1,\square}$ is bounded by the labeled distance on the common space $\Omega$, the construction of $\mathcal{P}$ directly implies:
\begin{equation*}
\begin{aligned}
\delta_{W_1,\square} \left( (\mathsf{X}_{\mathcal{P}}, \mathsf{W}_{\mathcal{P}}), (\mathsf{X}, \mathsf{W}) \right) 
\leq \|\mathsf{X} - \mathsf{X}_{\mathcal{P}}\|_{L^1} + \|\mathsf{W} - \mathsf{W}_{\mathcal{P}}\|_{\square} 
\leq \frac{C (\|\mathsf{X}\|_{L^2} + \|\mathsf{W}\|_{L^\infty})}{\sqrt{\log N}}.
\end{aligned}
\end{equation*}

Next, we consider the first term in \eqref{eqn:sampling_decomposition}. Note that the distance between the sampled structures is bounded by the norms of their differences:
\begin{equation*}
\begin{aligned}
\delta_{W_1, \square} \big( (\mathsf{X}_{[\bm{\xi}]}, \mathsf{W}_{[\bm{\xi}]}), (\mathsf{X}_{\mathcal{P}[\bm{\xi}]}, \mathsf{W}_{\mathcal{P}[\bm{\xi}]}) \big)
&\leq \|\mathsf{X}_{[\bm{\xi}]} - \mathsf{X}_{\mathcal{P}[\bm{\xi}]}\|_{L^1} + \|\mathsf{W}_{[\bm{\xi}]} - \mathsf{W}_{\mathcal{P}[\bm{\xi}]}\|_{\square} \\
&= \|(\mathsf{X} - \mathsf{X}_{\mathcal{P}})_{[\bm{\xi}]}\|_{L^1} + \|(\mathsf{W} - \mathsf{W}_{\mathcal{P}})_{[\bm{\xi}]}\|_{\square}.
\end{aligned}
\end{equation*}
We apply Lemma~\ref{lem:first_sampling_kernels} to the residual functions $\mathsf{X} - \mathsf{X}_{\mathcal{P}}$ and $\mathsf{W} - \mathsf{W}_{\mathcal{P}}$. It follows that it is $N$-certain that the sampled norms are close to the true norms:
\begin{equation*}
\begin{aligned}
\|(\mathsf{W} - \mathsf{W}_{\mathcal{P}})_{[\bm{\xi}]}\|_{\square}
&\leq \|\mathsf{W} - \mathsf{W}_{\mathcal{P}}\|_{\square} + \frac{C \|\mathsf{W} - \mathsf{W}_{\mathcal{P}}\|_{L^\infty}}{N^{1/4}}
\\
&\leq \frac{C \|\mathsf{W}\|_{L^\infty}}{\sqrt{\log N}} + \frac{2 C \|\mathsf{W}\|_{L^\infty}}{N^{1/4}}.
\end{aligned}
\end{equation*}
A similar estimate holds for the state component $\mathsf{X}$ (using the $L^1$ norm concentration). Since $N^{-1/4}$ decays significantly faster than $(\log N)^{-1/2}$, and the failure probability of the $N$-certain event decays exponentially (specifically, as $\exp(-C N^{\theta})$, which is negligible compared to any polynomial inverse), the contribution of the ``bad events'' to the expectation is asymptotically vanishing. Consequently, the total bound for the first term is dominated by the logarithmic rate:
\begin{equation*}
\begin{aligned}
\mathbb{E} \Big[ \delta_{W_1, \square} \big( (\mathsf{X}_{[\bm{\xi}]}, \mathsf{W}_{[\bm{\xi}]}), (\mathsf{X}_{\mathcal{P}[\bm{\xi}]}, \mathsf{W}_{\mathcal{P}[\bm{\xi}]}) \big) \Big]
\leq \frac{C (\|\mathsf{X}\|_{L^2} + \|\mathsf{W}\|_{L^\infty})}{\sqrt{\log N}}.
\end{aligned}
\end{equation*}

Finally, we handle the middle term in \eqref{eqn:sampling_decomposition}, which measures the distance between the projected structure and its sampled version. Since $\mathsf{X}_{\mathcal{P}}$ and $\mathsf{W}_{\mathcal{P}}$ are constant on the blocks of $\mathcal{P} = \{S_1, \dots, S_m\}$, this distance is controlled by the discrepancy between the volumes of the blocks $\pi(S_k)$ and their empirical frequencies.
Let $r_k \coloneqq \frac{1}{N} \sum_{i \in [N]} \mathbbm{1}_{S_k}(\bm{\xi}_i) - \pi(S_k)$.
To bound the distance $\delta_{W_1, \square}$, we construct a coupling that matches the empirical mass on each block $S_k$ with the measure $\pi|_{S_k}$ to the maximal extent possible. The remaining unmatched mass, which corresponds to the discrepancy $\sum_{k \in [m]} |r_k|$, is coupled arbitrarily. Estimating the cost for this residual portion (linearly for the $L^\infty$-bounded weights and via the Cauchy-Schwarz inequality for the $L^2$-bounded states) yields the following bound for the combinatorial error:
\begin{equation*}
\begin{aligned}
\delta_{W_1, \square} \big( (\mathsf{X}_{\mathcal{P}[\bm{\xi}]}, \mathsf{W}_{\mathcal{P}[\bm{\xi}]}), (\mathsf{X}_{\mathcal{P}}, \mathsf{W}_{\mathcal{P}}) \big)
\leq \|\mathsf{W}\|_{L^\infty} \sum_{k \in [m]} |r_k| + \|\mathsf{X}\|_{L^2} \sqrt{\sum_{k \in [m]} |r_k|}.
\end{aligned}
\end{equation*}
Taking the expectation, standard concentration estimates for the multinomial distribution imply that the mean deviation $\mathbb{E}[\sum |r_k|]$ scales as $O(m/\sqrt{N})$. With our choice $m \sim N^{1/4}$, this yields a polynomial decay rate asymptotically negligible compared to the logarithmic rate $(\log N)^{-1/2}$. 

Combining the estimates for all three terms yields the result in \eqref{eqn:sampling_lemma}.

\end{proof}

\bigskip 
\noindent \textbf{Proof of Lemma~\ref{lem:first_sampling_kernels}.} 
\textit{Note: This detailed proof is included in the arXiv preprint for the convenience of the reader and to ensure self-containedness. It essentially adapts the standard arguments from dense graph limit theory (specifically, the proof of the Sampling Lemma in \cite[Lemma 10.6]{lovasz2012large}) to our setting with asymmetric kernels and non-identical sampling distributions. This section is omitted from the journal submission.}

\medskip


We begin by considering the positive one-sided cut norm, defined as
\begin{equation*}
\|\mathsf{W}\|_{\square}^{+} \coloneqq \sup_{S,T \subseteq \Omega} \int_{S \times T} \mathsf{W}(\xi,\zeta) \, \mathrm{d} \xi \, \mathrm{d} \zeta.
\end{equation*}
Given the identity $\|\mathsf{W}\|_{\square} = \max\left\{\|\mathsf{W}\|_{\square}^{+}, \|-\mathsf{W}\|_{\square}^{+}\right\}$, the lemma follows if we can show that it is $N$-certain that
\begin{equation*}
\Big| \|\mathsf{W}_{[\bm{\xi}]}\|_{\square}^{+} - \|\mathsf{W}\|_{\square}^{+} \Big| \leq \frac{C \|\mathsf{W}\|_{L^\infty}}{N^{1/4}},
\end{equation*}
where the constant $C$ may differ from that in Lemma~\ref{lem:first_sampling_kernels} and vary from line to line.

To proceed with the proof for the one-sided norm, we first adapt a supporting lemma concerning the selection of a random subset of column indices. Let $B = (B_{ij})_{i,j \in [N]}$ be an $N \times N$ matrix. For any set $Q_1 \subseteq [N]$ of rows and any set $Q_2 \subseteq [N]$ of columns, we set $B(Q_1, Q_2) \coloneqq \sum_{i \in Q_1, j \in Q_2} B_{ij}$.
For $Q_1 \subseteq [N]$ a set of rows, let
\begin{equation*}
Q_1^+ \coloneqq \{j \in [N] \mid B(Q_1, \{j\}) > 0\},
\end{equation*}
which implies $B(Q_1, Q_1^+) \geq 0$. The sets $Q_1^-$, $Q_2^+$, and $Q_2^-$ are defined analogously.

\begin{lem}[Random Column Subset, analogous to \texorpdfstring{\cite[Lemma~10.7]{lovasz2012large}}{[RefName, Lemma 10.7]}]
\label{lem:random_column_subset}
Let $B = (B_{ij})_{i,j \in [N]}$ be an $N \times N$ matrix.
Let $S_1, S_2 \subseteq [N]$ (where $S_1$ is a set of rows and $S_2$ is a set of columns). Let $\bm{Q}$ be a random $q$-subset of $[N]$ (representing selected column indices), where $1 \leq q \leq N$. Then
\begin{equation*}
B(S_1, S_2) \leq \E \left[ B( (\bm{Q} \cap S_2)^+, S_2) \right] + \frac{\|B\|_{L^\infty}^{} N^2}{\sqrt{q}}.
\end{equation*}
\end{lem}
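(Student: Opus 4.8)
\textbf{Overall approach.} The plan is to follow the strategy of \cite[Lemma~10.7]{lovasz2012large} and analyze, column by column, how much mass the random set $Q \subseteq [N]$ captures. The key structural observation is that for a \emph{fixed} column $j$, the sign of $B(S_1 \cap Q, \{j\})$ determines whether $j$ belongs to $(Q \cap S_1)^+$ — wait, more precisely, we work with the roles as stated: $Q$ selects columns, and $(Q \cap S_2)^+$ is the set of rows positively correlated with the selected columns. So the plan is to build a lower bound on $\E[B((Q\cap S_2)^+, S_2)]$ by comparing it term-by-term with $B(S_1, S_2)$, absorbing the discrepancy into an error of size $O(\|B\|_{L^\infty} N^2 / \sqrt{q})$.

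\textbf{Key steps.} First I would fix the row set $S_1$ and the column set $S_2$ and write $B(S_1,S_2) = \sum_{i \in S_1} B(\{i\}, S_2)$. The idea is that a row $i$ contributes positively to $B(S_1,S_2)$ precisely when $B(\{i\},S_2) > 0$; the set of such rows, call it $S_1^{\star}$, satisfies $B(S_1,S_2) \le B(S_1^\star, S_2)$. So it suffices to produce a lower bound for $\E[B((Q\cap S_2)^+,S_2)]$ in terms of $B(S_1^\star,S_2)$, and in fact we may as well assume $S_1 = S_1^\star$. Second, the crux: I want to show that the random row set $R := (Q \cap S_2)^+$ — the rows whose correlation with the \emph{sampled} columns $Q \cap S_2$ is positive — approximates $S_1^\star$ well on average. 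For a fixed row $i$, whether $i \in R$ depends on the sign of $\sum_{j \in Q \cap S_2} B_{ij}$, which is an empirical (unnormalized) estimate of $\tfrac{q}{N} B(\{i\}, S_2)$ up to the hypergeometric sampling. Third, I would decompose $B(S_1^\star, S_2) - B(R, S_2)$ by rows: rows in $S_1^\star \setminus R$ contribute at most $\sum_i |B(\{i\},S_2)| \cdot \Pb(i \notin R)$, and one controls $\Pb(i \notin R)$ for rows with $B(\{i\},S_2) = \Delta_i > 0$ via a variance/Chebyshev estimate on the sampled sum: $\Pb(\sum_{j\in Q\cap S_2} B_{ij} \le 0)$ is small when $\Delta_i$ is large compared to $\|B\|_{L^\infty}\sqrt{N/q}$, and for rows with small $\Delta_i$ the contribution is itself small. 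Fourth, summing over the at most $N$ rows and the at most $N$ columns and bundling the two regimes gives a total error bounded by $C \|B\|_{L^\infty} N^2/\sqrt{q}$; symmetrically one checks that $B(R, S_2) \le \E[B(R,S_2)]$ needs the expectation, so the expectation sits on the right-hand side exactly as stated.

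\textbf{Main obstacle.} The delicate point is the second-moment bound on $\sum_{j \in Q \cap S_2} B_{ij}$ under sampling \emph{without} replacement (a uniform $q$-subset), which is hypergeometric-type rather than a sum of independent variables; one must either invoke the standard fact that sampling without replacement has variance no larger than sampling with replacement, or pass to a with-replacement coupling. Then balancing the two regimes — rows where $\Delta_i$ is large (handled by concentration) versus rows where $\Delta_i$ is small (handled trivially because they barely contribute) — at the threshold $\Delta_i \sim \|B\|_{L^\infty}\sqrt{N/q}$ is what produces the final $N^2/\sqrt q$ rate. I expect the rest (the two triangle-inequality decompositions, the reduction to $S_1 = S_1^\star$) to be routine bookkeeping.
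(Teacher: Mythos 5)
Your proposal is correct and follows essentially the same route as the paper: both reduce to a per-row analysis of the sampled sum $A_i=\sum_{j\in Q\cap S_2}B_{ij}$ (with the hypergeometric second-moment computation), apply Chebyshev, and sum the $O(\|B\|_{L^\infty}N/\sqrt{q})$ per-row errors over $N$ rows. The only cosmetic differences are that the paper works from an exact identity for $B(S_1,S_2)$ (rather than reducing to $S_1=S_1^\star$ upfront) and closes the per-row bound with the observation $\Pb(A_i<0)\,b_i\leq\sqrt{\Pb(A_i<0)}\,b_i\leq\sqrt{Nc_i/q}$ instead of your explicit two-regime threshold balancing at $\Delta_i\sim\|B\|_{L^\infty}\sqrt{N/q}$; both yield the identical rate.
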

The essence of this lemma is to provide a uniform upper bound, so the left-hand side can be strengthened to $B(S_2^+, S_2) \geq B(S_1, S_2)$.

\begin{proof}
We start from the following identity: For any $\bm{Q} \subseteq [N]$,
\begin{align*}
B(S_1,S_2) &= B( (\bm{Q} \cap S_2)^+ \cap S_1, S_2) + B( (\bm{Q} \cap S_2)^- \cap S_1, S_2)
\\
&= B( (\bm{Q} \cap S_2)^+, S_2)
\\
& \quad - \sum_{i \in [N] \setminus S_1} B( (\bm{Q} \cap S_2)^+ \cap \{i\}, S_2) + \sum_{i \in S_1} B( (\bm{Q} \cap S_2)^- \cap \{i\}, S_2).
\end{align*}
Our goal is to show the per-row bounds that for each $i \in [N]$,
\begin{align*}
\E \left[ B( (\bm{Q} \cap S_2)^- \cap \{i\}, S_2) \right] &\leq \frac{\|B\|_{L^\infty}^{} N}{\sqrt{q}}, \\
- \E \left[ B( (\bm{Q} \cap S_2)^+ \cap \{i\}, S_2) \right] &\leq \frac{\|B\|_{L^\infty}^{} N}{\sqrt{q}}.
\end{align*}
Taking the expectation for the identity and summing over $i$ then yields the desired estimate.

For a fixed row $i \in S_1$, let $\bm{A}_{i} \coloneqq \sum_{j \in \bm{Q} \cap S_2} B_{ij}$, $b_i \coloneqq \sum_{j \in S_2} B_{ij}$, and $c_i \coloneqq \sum_{j \in S_2} B_{ij}^2$. The expectation term for row $i$ is then
\begin{align*}
\E \left[ B\big( (\bm{Q} \cap S_2)^- \cap \{i\}, S_2\big) \right] = \Pb(\bm{A}_i < 0) b_i.
\end{align*}
If $b_i \leq 0$, this term is non-positive, satisfying the required bound. Assume $b_i > 0$.
We have moment estimates $\E[\bm{A}_i] = \frac{q}{N} b_i > 0$ and
\begin{equation*}
\E[\bm{A}_i^2] = \frac{q}{N} c_i + \frac{q(q-1)}{N(N-1)} (b_i^2 - c_i) \leq \frac{q}{N} c_i + \frac{q^2}{N^2} b_i^2.
\end{equation*}
By Chebyshev's inequality, since $\E[\bm{A}_i]>0$:
\begin{align*}
\Pb(\bm{A}_i < 0) \leq \Pb(|\bm{A}_i - \E[\bm{A}_i]| \geq \E[\bm{A}_i]) 
\leq \frac{\E[\bm{A}_i^2] - (\E[\bm{A}_i])^2}{(\E[\bm{A}_i])^2} \leq \frac{N}{q}\frac{c_i}{b_i^2}.
\end{align*}
Since $\Pb(\bm{A}_i < 0) \in [0,1]$ and $b_i > 0$, we have $\Pb(\bm{A}_i < 0)b_i \leq \sqrt{\Pb(\bm{A}_i < 0)} b_i$. Thus,
\begin{align*}
\E \left[ B\big( (\bm{Q} \cap S_2)^- \cap \{i\}, S_2\big) \right] \leq \sqrt{\Pb(\bm{A}_i < 0)} b_i \leq \frac{\|B\|_{L^\infty}^{} N}{\sqrt{q}}.
\end{align*}
This establishes the first per-row bound, and a symmetric argument for the second per-row bound completes the proof.

\end{proof}

The following lemma gives an upper bound on the one-sided cut norm, using the sampling procedure from the previous lemma.
\begin{lem}[One-Sided Upper Bound, analogous to \texorpdfstring{\cite[Lemma~10.8]{lovasz2012large}}{[RefName, Lemma 10.8]}]
\label{lem:upper_bound_one_sided_cut_norm}
Let $B = (B_{ij})_{i,j \in [N]}$ be an $N \times N$ matrix. Let $\bm{Q}_{1}$ and $\bm{Q}_{2}$ be independent random $q$-subsets of $[N]$ ($1\le q\le N$). Then
\begin{equation*}
\|B\|_{\square}^{+} \leq \frac{1}{N^{2}}\E_{\bm{Q}_{1},\bm{Q}_{2}}\left[\max_{\substack{R_{1}\subseteq \bm{Q}_{1}, R_{2}\subseteq \bm{Q}_{2}}} B(R_{2}^{+},R_{1}^{+})\right] + \frac{2 \|B\|_{L^\infty}^{}}{\sqrt{q}}.
\end{equation*}
\end{lem}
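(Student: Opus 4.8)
The plan is to reduce the defining supremum of the one-sided cut norm to the stated expectation by two applications of the Random Column Subset lemma (Lemma~\ref{lem:random_column_subset}): the first thins the arbitrary column set down to a subset of the random sample $Q_2$, and the second, carried out on the transpose $B^{\top}$, does the same for the rows using $Q_1$. Identifying $B$ with its associated step-function kernel, one has $\|B\|_{\square}^{+} = N^{-2}\sup_{S_1,S_2\subseteq[N]} B(S_1,S_2)$, so it suffices to bound $B(S_1,S_2)$ for an arbitrary row set $S_1$ and column set $S_2$ by $N^2$ times the right-hand side of the claim.

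First I would apply Lemma~\ref{lem:random_column_subset} to $(S_1,S_2)$ with the random $q$-subset $Q_2$ of column indices, and writing $R_2\defeq Q_2\cap S_2\subseteq Q_2$ this gives
\begin{equation*}
B(S_1,S_2)\;\le\;\E_{Q_2}\!\big[B(R_2^{+},S_2)\big]+\frac{\|B\|_{L^\infty}N^2}{\sqrt{q}},
\end{equation*}
where $R_2^{+}=\{\,i\in[N]:B(\{i\},R_2)>0\,\}$ is the row set from that lemma. Next, for each fixed realization of $Q_2$ (hence of $R_2^{+}$), I would rewrite $B(R_2^{+},S_2)=B^{\top}(S_2,R_2^{+})$ and apply Lemma~\ref{lem:random_column_subset} to $B^{\top}$ with ``row set'' $S_2$, ``column set'' $R_2^{+}$, and the random $q$-subset $Q_1$ of the columns of $B^{\top}$, i.e.\ of the rows of $B$. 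Setting $R_1\defeq Q_1\cap R_2^{+}\subseteq Q_1$ and unwinding the transpose — the ``$+$'' of $B^{\top}$ applied to $R_1$ is the optimal column response $R_1\mapsto R_1^{+}$ of $B$, and $B^{\top}(R_1^{+},R_2^{+})=B(R_2^{+},R_1^{+})$, while $\|B^{\top}\|_{L^\infty}=\|B\|_{L^\infty}$ — this yields
\begin{equation*}
B(R_2^{+},S_2)\;\le\;\E_{Q_1}\!\big[B(R_2^{+},R_1^{+})\big]+\frac{\|B\|_{L^\infty}N^2}{\sqrt{q}}.
\end{equation*}

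Chaining the two displays and using the independence of $Q_1$ and $Q_2$, then majorizing the particular pair $(R_1,R_2)$ by the maximum over all $R_1\subseteq Q_1$, $R_2\subseteq Q_2$, produces
\begin{equation*}
B(S_1,S_2)\;\le\;\E_{Q_1,Q_2}\!\Big[\max_{R_i\subseteq Q_i}B(R_2^{+},R_1^{+})\Big]+\frac{2\|B\|_{L^\infty}N^2}{\sqrt{q}}.
\end{equation*}
Since the right-hand side no longer depends on $S_1$ or $S_2$, taking the supremum over these and dividing by $N^2$ gives precisely the asserted inequality (so $C_B=2\|B\|_{L^\infty}$ serves in the statement).

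The main, and essentially the only, obstacle is the bookkeeping in the transpose step: one must keep straight which of the two ``$+$'' maps is being used (optimal row response to a column set versus optimal column response to a row set), confirm that Lemma~\ref{lem:random_column_subset} applies verbatim to $B^{\top}$ — it does, since no symmetry of the matrix is assumed there — and verify that its conclusion rewrites exactly as $B(R_2^{+},R_1^{+})$ with $R_1=Q_1\cap R_2^{+}$. No concentration inequality enters at this stage: the fluctuation terms $\|B\|_{L^\infty}N^2/\sqrt{q}$ are already deterministic, and the proof is a direct double substitution into Lemma~\ref{lem:random_column_subset}.
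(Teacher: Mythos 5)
Your argument is correct and follows the paper's proof essentially verbatim: both apply the Random Column Subset lemma twice, first thinning $S_2$ down to $R_2=Q_2\cap S_2$ and then thinning rows via the transpose to get $R_1=Q_1\cap R_2^{+}$, followed by majorizing with the maximum over $R_i\subseteq Q_i$. Your version is slightly more explicit in spelling out the transpose bookkeeping in the second application (the paper simply invokes the lemma ``twice''), but the substance is the same.
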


\begin{proof}
Let $S_1^*, S_2^* \subseteq [N]$ be sets such that $N^2 \|B\|_{\square}^{+} = B(S_1^*, S_2^*)$.
By applying Lemma~\ref{lem:random_column_subset} twice (once for columns and then symmetrically for rows), we have
\begin{align*}
B(S_1^*, S_2^*) & \leq \E_{\bm{Q}_2}\left[B\left((\bm{Q}_2 \cap S_2^*)^{+}, S_2^*\right)\right] + \frac{\|B\|_{L^\infty}^{} N^2}{\sqrt{q}}
\\
& \leq \E_{\bm{Q}_1,\bm{Q}_2}\left[B\left((\bm{Q}_2 \cap S_2^*)^{+}, (\bm{Q}_1 \cap (\bm{Q}_2 \cap S_2^*)^{+})^{+}\right)\right] + \frac{2 \|B\|_{L^\infty}^{} N^2}{\sqrt{q}}.
\end{align*}
Dividing by $N^2$ and taking the supremum over all $R_1 \subseteq \bm{Q}_1$ and $R_2 \subseteq \bm{Q}_2$ yields the lemma.
\end{proof}

\begin{proof}[Proof of Lemma~\ref{lem:first_sampling_kernels}]
To bound the difference $\|\mathsf{W}_{[\bm{\xi}]}\|_{\square} - \|\mathsf{W}\|_{\square}$ from below, we first consider its expectation.
For any two measurable subsets $S_{1}, S_{2}\subset\Omega$, we have by definition of the empirical cut norm,
\begin{equation*}
\|\mathsf{W}_{[\bm{\xi}]}\|_{\square}^{+} \ge \frac{1}{N^{2}}\sum_{i,j : \bm{\xi}_i \in S_1, \bm{\xi}_j \in S_2} \mathsf{W}(\bm{\xi}_i, \bm{\xi}_j).
\end{equation*}
Taking the expectation, assuming $\bm{\xi}_i$ are independent with $\mathrm{Law}(\bm{\xi}_i) = \pi_i$, we have
\begin{align*}
\E_{[\bm{\xi}]}\left[\|\mathsf{W}_{[\bm{\xi}]}\|_{\square}^{+}\right] &\ge \frac{1}{N^{2}}\E_{[\bm{\xi}]}\left[\sum_{i,j : \bm{\xi}_i \in S_1, \bm{\xi}_j \in S_2} \mathsf{W}(\bm{\xi}_i, \bm{\xi}_j)\right]
\\
&= \frac{1}{N^2} \sum_{i,j \in [N], \ i \neq j} \int_{S_{1}\times S_{2}} \mathsf{W}(\zeta, \eta) \, \mathrm{d} \pi_i(\zeta) \, \mathrm{d} \pi_j(\eta) + \frac{1}{N^2} \sum_{i \in [N]} \int_{S_{1}\cap S_{2}} \mathsf{W}(\zeta, \zeta) \, \mathrm{d} \pi_i(\zeta)
\\
&\ge \int_{S_{1}\times S_{2}} \mathsf{W}(\zeta, \eta) \, \mathrm{d} \pi(\zeta) \, \mathrm{d} \pi(\eta) - \frac{2 \|\mathsf{W}\|_{L^\infty}}{N}.
\end{align*}
Taking the supremum of the right side over all measurable sets $S_{1}, S_{2}$ yields
\begin{equation*}
\E_{[\bm{\xi}]}\left[\|\mathsf{W}_{[\bm{\xi}]}\|_{\square}^{+}\right] \ge \|\mathsf{W}\|_{\square}^{+} - \frac{2\|\mathsf{W}\|_{L^\infty}}{N}.
\end{equation*}
The $N$-certain bound $\|\mathsf{W}_{[\bm{\xi}]}\|_{\square}^{+} - \|\mathsf{W}\|_{\square}^{+} \geq -\frac{C \|\mathsf{W}\|_{L^\infty}}{N^{1/4}}$ then follows by a concentration of measure argument (such as Azuma's Inequality), noting that changing one sample $\bm{\xi}_k$ alters $\|\mathsf{W}_{[\bm{\xi}]}\|_{\square}^{+}$ by at most $O(1/N)$, which allows for the construction of a Doob martingale.

To prove the upper bound of $\|\mathsf{W}_{[\bm{\xi}]}\|_{\square}^{+} - \|\mathsf{W}\|_{\square}^{+}$, let $\bm{Q}_{1}$ and $\bm{Q}_{2}$ be independent random $q$-subsets of $[N]$, where $q=\lfloor\sqrt{N}/4\rfloor$. Lemma~\ref{lem:upper_bound_one_sided_cut_norm} states that for any given realization $[\bm{\xi}]$,
\begin{equation*}
\|\mathsf{W}_{[\bm{\xi}]}\|_{\square}^{+} \leq \frac{1}{N^{2}}\E_{\bm{Q}_{1},\bm{Q}_{2}}\left[\max_{\substack{R_{1}\subseteq \bm{Q}_{1}, R_{2}\subseteq \bm{Q}_{2} }} \mathsf{W}_{[\bm{\xi}]}(R_{2}^{+},R_{1}^{+})\right] + \frac{2 \|\mathsf{W}\|_{L^\infty}^{}}{\sqrt{q}}.
\end{equation*}

Let $\bm{Q} = \bm{Q}_1 \cup \bm{Q}_2$, and let $[\bm{\xi}]_{\bm{Q}} = (\bm{\xi}_i)_{i \in \bm{Q}}$ denote the subset of samples indexed by $\bm{Q}$.
For fixed realizations of $\bm{Q}_1$ and $\bm{Q}_2$, we consider arbitrary subsets $R_1 \subseteq \bm{Q}_1$ and $R_2 \subseteq \bm{Q}_2$. Accordingly, we define the sets $Y_1, Y_2 \subseteq \Omega$ as follows:
\begin{align*}
Y_1 = \left\{\zeta \in \Omega \ \middle|\ \sum_{i' \in R_1} \mathsf{W}(\bm{\xi}_{i'},\zeta) > 0\right\}, \quad
Y_2 = \left\{\zeta \in \Omega \ \middle|\ \sum_{j' \in R_2} \mathsf{W}(\zeta, \bm{\xi}_{j'}) > 0\right\}.
\end{align*}
Crucially, note that the definitions of $Y_1$ and $Y_2$ depend exclusively on the samples in $[\bm{\xi}]_{\bm{Q}}$, and are therefore independent of the remaining samples $[\bm{\xi}]_{[N] \setminus \bm{Q}} = (\bm{\xi}_i)_{i \in [N] \setminus \bm{Q}}$.
Therefore, conditioning on fixed $R_i \subseteq \bm{Q}_i \subseteq [N]$, $[\bm{\xi}]_{\bm{Q}}$, and consequently fixed sets $Y_1, Y_2$, we can decompose the sum as follows:
\begin{align*}
\mathsf{W}_{[\bm{\xi}]}(R_{2}^{+},R_{1}^{+}) &= \sum_{i, j \in [N]} \mathsf{W}(\bm{\xi}_i, \bm{\xi}_j) \mathbbm{1}_{\bm{\xi}_{i} \in Y_2} \mathbbm{1}_{\bm{\xi}_{j} \in Y_1}
\\
&= \sum_{i, j \in [N] \setminus \bm{Q}} \mathsf{W}(\bm{\xi}_i, \bm{\xi}_j) \mathbbm{1}_{\bm{\xi}_{i} \in Y_2} \mathbbm{1}_{\bm{\xi}_{j} \in Y_1} + \sum_{i \in \bm{Q} \text{ or } j \in \bm{Q}} \mathsf{W}(\bm{\xi}_i, \bm{\xi}_j) \mathbbm{1}_{\bm{\xi}_{i} \in Y_2} \mathbbm{1}_{\bm{\xi}_{j} \in Y_1}.
\end{align*}
For the first term, we take the expectation over the remaining samples $[\bm{\xi}]_{[N] \setminus \bm{Q}}$ and obtain
\begin{align*}
&\E_{[\bm{\xi}]_{[N] \setminus \bm{Q}}} \bigg[ \sum_{i, j \in [N] \setminus \bm{Q}} \mathsf{W}(\bm{\xi}_i, \bm{\xi}_j) \mathbbm{1}_{\bm{\xi}_{i} \in Y_2} \mathbbm{1}_{\bm{\xi}_{j} \in Y_1} \bigg]
\\
& \quad = \sum_{i, j \in [N] \setminus \bm{Q}} \int_{\Omega \times \Omega} \mathsf{W}(\zeta, \eta) \mathbbm{1}_{\zeta \in Y_2} \mathbbm{1}_{\eta \in Y_1} \, \mathrm{d} \pi_i(\zeta) \, \mathrm{d} \pi_j(\eta)
\\
& \quad \leq N^2 \|\mathsf{W}\|_{\square}^{+}.
\end{align*}
The second term contains no more than $2N|\bm{Q}| \leq 4Nq$ entries. Hence,
\begin{align*}
\E_{[\bm{\xi}]_{[N] \setminus \bm{Q}}} \big[ \mathsf{W}_{[\bm{\xi}]}(R_{2}^{+},R_{1}^{+}) \big] \leq N^2 \|\mathsf{W}\|_{\square}^{+} + 4\|\mathsf{W}\|_{L^\infty}Nq.
\end{align*}

Then, following a similar sample concentration argument, we can obtain an $N$-certain bound. Note that changing one sample $\bm{\xi}_k$ (where $k \in [N] \setminus \bm{Q}$) alters the sum $\mathsf{W}_{[\bm{\xi}]}(R_{2}^{+},R_{1}^{+})$ by at most $4N\|\mathsf{W}\|_{L^\infty}$. By Azuma's inequality, with probability at least $1 - e^{-1.9 q}$,
\begin{align*}
\mathsf{W}_{[\bm{\xi}]}(R_{2}^{+},R_{1}^{+}) &\leq \E_{[\bm{\xi}]_{[N] \setminus \bm{Q}}} \big[ \mathsf{W}_{[\bm{\xi}]}(R_{2}^{+},R_{1}^{+}) \big] + 7.9 \|\mathsf{W}\|_{L^\infty} N\sqrt{N q}
\\
&\leq N^2 \|\mathsf{W}\|_{\square}^{+} + 4\|\mathsf{W}\|_{L^\infty}Nq + 7.9 \|\mathsf{W}\|_{L^\infty} N\sqrt{N q}.
\end{align*}
Recall that we take $q=\lfloor\sqrt{N}/4\rfloor$. The number of possible pairs of sets $R_1 \subseteq \bm{Q}_1$ and $R_2 \subseteq \bm{Q}_2$ is $4^q$, which grows slower than $e^{1.9 q}$. Hence, by taking the maximum over all $R_1$ and $R_2$, one can still obtain $N$-certain bounds:
\begin{align*}
\max_{\substack{R_{1}\subseteq \bm{Q}_{1}, R_{2}\subseteq \bm{Q}_{2}}} \mathsf{W}_{[\bm{\xi}]}(R_{2}^{+},R_{1}^{+}) \leq N^2 \|\mathsf{W}\|_{\square}^{+} + 4\|\mathsf{W}\|_{L^\infty}Nq + 7.9 \|\mathsf{W}\|_{L^\infty} N\sqrt{N q}.
\end{align*}
This bound holds for the random samples $[\bm{\xi}]_{[N]\setminus\bm{Q}}$, conditional on any fixed realization of $\bm{Q}$ and $[\bm{\xi}]_{\bm{Q}}$.

Substituting this back into the expectation over $\bm{Q}_{1}, \bm{Q}_{2}$ from Lemma~\ref{lem:upper_bound_one_sided_cut_norm}, we obtain:
\begin{align*}
\|\mathsf{W}_{[\bm{\xi}]}\|_{\square}^{+} &\leq \frac{1}{N^{2}}\E_{\bm{Q}_{1},\bm{Q}_{2}}\left[\max_{\substack{R_{1}\subseteq \bm{Q}_{1}, R_{2}\subseteq \bm{Q}_{2}}}  \mathsf{W}_{[\bm{\xi}]}(R_{2}^{+},R_{1}^{+})\right]+\frac{2 \|\mathsf{W}\|_{L^\infty}^{}}{\sqrt{q}}
\\
&\leq \|\mathsf{W}\|_{\square}^{+} + 4\|\mathsf{W}\|_{L^\infty} \frac{q}{N} + 7.9 \|\mathsf{W}\|_{L^\infty} \sqrt{\frac{q}{N}} + 2 \|\mathsf{W}\|_{L^\infty}^{} \frac{1}{\sqrt{q}}
\\
& \leq \|\mathsf{W}\|_{\square}^{+} + \frac{C \|\mathsf{W}\|_{L^\infty}}{N^{1/4}}.
\end{align*}
Finally, by applying the same argument for $\|-\mathsf{W}\|_{\square}^{+}$, we conclude the proof.

\end{proof}

\section*{Acknowledgments}

The author would like to thank Nicolas Fournier and Pierre-Emmanuel Jabin for the valuable comments on the presentation after the first draft of this article was written.

\bibliography{adapting_weights_v5}{}
\bibliographystyle{siam} 

\end{document}